	\titleformat{\subsection}[runin]{\normalfont\bfseries}{\thesubsection.}{.5em}{}[. ~]
		\titlespacing{\subsection}{0pt}{1.5ex plus .1ex minus .2ex}{0pt}
\newtheoremstyle{thmstyle}
{\topsep}
{\topsep}
{\itshape}
{0pt}
{\bfseries}
{.}
{5pt plus 1pt minus 1pt}
{#2.\hspace{3pt}#1\thmnote{~\textnormal{(#3)}}}
\newtheoremstyle{defistyle}
{\topsep}
{\topsep}
{}
{0pt}
{\bfseries}
{.}
{5pt plus 1pt minus 1pt}
{#2.\hspace{3pt}#1#3}
\theoremstyle{thmstyle}
\newtheorem{thm}[subsection]{Theorem}
\newtheorem{lemma}[subsection]{Lemma}
\newtheorem{prop}[subsection]{Proposition}
\newtheorem{cor}[subsection]{Corollary}
\theoremstyle{defistyle}
\newtheorem{defi}[subsection]{Definition}
\newtheorem{rmk}[subsection]{Remark}
\newcommand{\tance}{\mathop{\mathrm{ta}}}
\newcommand{\Arg}{\mathop{\mathrm{Arg}}}
\newcommand{\Area}{\mathop{\mathrm{Area}}}
\newcommand{\SU}{\mathop{\mathrm{SU}}}
\newcommand{\PU}{\mathop{\mathrm{PU}}}
\newcommand{\BV}{{\mathrm B}\,V}
\newcommand{\SV}{{\mathrm S}\,V}
\newcommand{\EV}{{\mathrm E}\,V}
\newcommand{\imag}{\mathop{\mathrm{Im}}}
\newcommand{\PV}{\mathbb{P}_{\mathbb C}V}
\newcommand{\dist}{\mathop{\mathrm{dist}}}
\begin{document}
\title{Basic $\PU(1,1)$-representations of the hyperelliptic group are discrete} 
\author{Felipe A. Franco}
\date{}

\maketitle

\begin{abstract}
We show that a $\PU(1,1)$-representation of 
the hyperelliptic group~$H_n$ is basic if and only if it is discrete and faithful, 
thus partially proving a conjecture  by S. Anan'in and E. 
Bento Gon\c calves~\cite{Sasha2009,ABG2007} in the case of the Poincar\'e disc.
\end{abstract}

\section{Introduction}
In~\cite{ABG2007}, S. Anan'in and E. Bento Gon\c calves obtained an elementary
description of the Teichm\"uller space of hyperelliptic surfaces (see 
Section~\ref{sec:hypgroups} for a brief discussion). It revolves 
around the study of $\PU(1,1)$-representa{\-}tions of the
{\it hyperelliptic group\/} $H_n$, the group with $n\geq 5$ generators $r_1,\ldots, r_n$
and defining relations $r_i^2=1$ and $r_n\ldots r_1=1$, where $\PU(1,1)$ is the group
of orientation preserving isometries of the Poincar\'e disc $\mathbb D$. 
Hyperelliptic surfaces and hyperelliptic groups are related as follows. 
By~\cite{Maclachlan1971}, if $\Sigma=\mathbb D/\pi_1\Sigma$ is a 
hyperelliptic surface of genus~$g\geq 2$, the extension of the fundamental group~$\pi_1\Sigma$
induced by the hyperelliptic involution of~$\Sigma$ is the group $H_n$, where 
$n=2g+2$. On the other hand,
given $n\geq 5$, if $\varrho:H_n\to\PU(1,1)$ is a discrete and faithful representation
of $H_n$, either~$n$ is even and there exists a hyperelliptic surface $\Sigma$ of genus
$g=\frac{n-2}{2}$ such that $\pi_1\Sigma$ is an index~$2$ subgroup of $H_n$, or 
$n$ is odd in which case there exists a hyperelliptic surface $\Sigma$ with genus 
$g=n-3$ and $\pi_1\Sigma$ is an index~$4$ subgroup of $H_n$ 
(see, for instance, \cite[Section~2.1.4]{AGG2011}). 

Since every involution in~$\PU(1,1)$ is a reflection in a point of $\mathbb D$, 
there is a one-to-one correspondence between the space of representations
$\varrho:H_n\to\PU(1,1)$, $\varrho(r_i)\neq 1$, and the space of
relations $R^{q_n}\ldots R^{q_1}=\pm 1$ between reflections $R^{q_i}$ in points $q_i\in\mathbb D$
(we write such relations at the level of $\SU(1,1)$). Moreover, given such a relation,
we can deform it using {\it bendings\/} (see~\cite{Sasha2012,spell}, 
also referred as 
{\it simple earthquakes\/} in~\cite{ABG2007}): for an index 
$i$ modulo~$n$, if $q_i'$ and $q_{i+1}'$ are the points in $\mathbb D$ obtained 
by moving the points $q_i$ and $q_{i+1}$ along the geodesic that joins them while 
maintaining their distance, then
$R^{q_{i+1}'}R^{q_i'}=R^{q_{i+1}}R^{q_i}$; so, changing $R^{q_{i+1}}R^{q_i}$ to 
$R^{q_{i+1}'}R^{q_i'}$ in the initial relation, we obtain a new one.
The corresponding new representation produces another hyperelliptic surface
that is a {\it bending-deformation\/} of the starting one. In this sense, 
bending-deformations  are just a particular case of Fenchel-Nielsen twist 
deformations~\cite{Hubbard2016}.

Sometimes a relation can have its length reduced. For example, if in the relation 
$R^{q_n}\ldots R^{q_1}=\pm 1$
we have $q_i=q_{i+1}$, by {\it canceling\/} $R^{q_{i+1}}R^{q_i}$ (which is the identity 
in $\PU(1,1)$), we get a new relation of smaller length $n-2$. 
Basic relations are those that cannot have their length reduced {\sl even after a finite
number of bendings}. In~\cite{Sasha2012,ABG2007} it is conjectured, both in the case 
of the Poincar\'e disc and the complex hyperbolic plane 
$\mathbb H_{\mathbb C}^2$, that there exists a finite number of basic relations and 
that the representation $\varrho:H_n\to\PU(1,1)$ (or $\PU(2,1)$) induced by a basic 
relation is discrete and faithful.

In this work, we introduce the concept of {\it concatenating\/} representations of $H_n$ 
(see Subsection~\ref{subsec:concat})
and formalize the idea of basic relations and basic representations (see 
Definition~\ref{defi:basic}). Considering the case of $\mathbb D$, we prove that a 
representation of $H_n$ is basic if and only if it is discrete and faithful 
(see Theorem~\ref{thm:main}). This proves the second half of the mentioned conjecture
and disproves the first one, since there exists a discrete and faithful representation
$\varrho:H_n\to\PU(1,1)$ for each~$n\geq 5$.

\section{Hyperbolic geometry}
\label{sec:hypgeo}

We follow the notation in~\cite{SashaGrossi2011}. Let $V$ be a $2$-dimensional 
$\mathbb C$-linear space equipped with a Hermitian
form $\langle-,-\rangle$ of signature~$+-$. We consider $\PV$
divided into {\it negative\/}, {\it isotropic\/}, and {\it positive\/} points:
$$\BV:=\{p\in\PV\mid\langle p,p\rangle <0\},\ \SV:=\{p\in\PV\mid\langle p,p\rangle=0\},
\ \EV:=\{p\in\PV\mid\langle p,p\rangle >0\}.$$
(Here and throughout this paper, we use the same notation for both elements of $\PV$ and 
its representatives in $V$.)
For nonisotropic points $p\in\PV$, we have the identification $\mathrm T_p\PV\simeq
\mathrm{Lin}_{\mathbb C}(\mathbb Cp,p^\perp)$. The Hermitian form of $V$ endows
 $\BV$ and $\EV$ with a Hermitian metric defined by
\begin{equation}
\label{eq:hermitianform}
\langle t_1,t_2\rangle :=- 4\frac{\langle t_1(p),t_2(p)\rangle}{\langle p,p\rangle},
\end{equation}
where $t_1,t_2\in\mathrm{Lin}_{\mathbb C}(\mathbb Cp,p^\perp)$. 
Tanking the real part of this metric, we equip $\BV$ and $\EV$ with their usual
Poincar\'e disc Riemannian metric. We call $\PV$ the {\it Riemann-Poincar\'e sphere\/}. 

Given two nonisotropic points $p_1,p_2\in\PV\setminus\SV$, the {\it tance\/} between 
them is defined by
$$\tance(p_1,p_2):=\frac{\langle p_1,p_2\rangle\langle p_2,p_1\rangle}
{\langle p_1,p_1\rangle\langle p_2,p_2\rangle}.$$
The distance between two points $p_1,p_2$ in $\BV$ (or in $\EV$) is a monotonic function of 
the tance, namely, $\mathop{\mathrm{cosh}}^2\big(\frac{\dist(p_1,p_2)}{2}\big)=\tance(p_1,p_2)$.
For our purposes, it is enough to work only with the disc $\BV$, 
which will be denoted by $\mathbb D$, and its absolute $\partial \mathbb D:=\SV$; we also 
denote $\overline{\mathbb D}:=\mathbb D\cup\partial\mathbb D$. 
For distinct points $p_1,p_2\in\overline{\mathbb D}$ we denote by $\mathrm G\wr p_1,p_2\wr$ 
the geodesic line through $p_1,p_2$ and by $\mathrm G[q_1,q_2]$ the closed geodesic segment 
connecting $p_1,p_2$.

\medskip

Let us quickly discuss what is the area of a region bounded
by a piecewise geodesic path. Let~$\mathbb B^2$ denote the unit closed disc in~$\mathbb C$ and
let $\varphi:\mathbb B^2\to\overline{\mathbb D}$ be a piecewise smooth function such that
$\varphi(\partial\mathbb B^2)$ is the union of finitely many geodesic segments in 
$\overline{\mathbb D}$ and such that $\varphi^{-1}(\partial\mathbb D)$ is a finite set of points in
the boundary of $\mathbb B^2$.
The imaginary part of the Hermitian metric~\eqref{eq:hermitianform} defines
a K\"ahler form on $\mathbb D$, which we denote $\omega$. Let $P$ be a K\"ahler potential
of $\omega$, i.e., $dP=\omega$. The area of the region $\varphi(\mathbb B^2)$ is given
by $\int_{\varphi(\mathbb B^2)}\omega=\int_{\partial\varphi(\mathbb B^2)}P$. 
So, for a geodesic path~$C$ with vertices $p_1,\ldots,p_n$, and given any point
$c\in\overline{\mathbb D}$, the area of the region delimited by $C$ coincides with the sum
$$\sum_{i=1}^n\Area\Delta(c,p_i,p_{i+1}),$$
where $\Delta(c,p_i,p_{i+1})$ stands for the oriented triangle with vertices
$c,p_i,p_{i+1}$ and the indices are considered modulo~$n$. This sum does not
depend on the point $c$ (see~\cite[Remark~2.3]{ABG2007}).

\medskip

The group of orientation-preserving isometries of $\mathbb D$ is $\PU(1,1)=\SU(1,1)/\{-1,1\}$,
where 
$$\mathrm{U}(1,1):=\{I\in\mathrm{GL}_{\mathbb C}V\mid \langle Iv,Iw\rangle=\langle v,w\rangle
\ \text{for every}\ v,w\in V\},$$ 
and $\SU(1,1)$ consists of the elements of $\mathrm{U}(1,1)$ with determinant~$1$.
Every isometry that we will consider is orientation-preserving.
The nonidentical isometries are classified into {\it elliptic\/}, {\it parabolic\/},
and {\it hyperbolic\/}: elliptic isometries fix one point in $\mathbb D$,
parabolic isometries fix exactly one point in $\partial\mathbb D$, and hyperbolic isometries
fix exactly two points in $\partial\mathbb D$.

Every elliptic isometry can be written as
$$R_{\alpha}^p:x\mapsto (\overline\alpha-\alpha)\frac{\langle x,p\rangle}
{\langle p,p\rangle}+\alpha x,$$
where $p\in\mathbb D$ and $\alpha\in\mathbb C$ is such that
$|\alpha|=1$ and $\alpha\neq\pm 1$. We call $p$ the {\it center\/} and $\alpha$ 
the {\it parameter\/} of the elliptic isometry $R_\alpha^p$. 
The isometry $R_\alpha^p$ acts as a rotation around $p$ by the angle $\Arg\alpha^2$.
Every (nonidentical) involution in $\PU(1,1)$ is a reflection $R_{\mathrm{i}}^{p}$ in 
a point $p\in\mathbb D$. We denote the reflection $R_{\mathrm{i}}^p$, where $p\in\mathbb D$, simply 
by~$R^p$.
(Note that $R_{-\mathrm{i}}^p=-R_{\mathrm{i}}^p$, so both $R_{\pm \mathrm{i}}^p$ are 
possible lifts to $\SU(1,1)$ of an
involution fixing $p$; we choose the lift with parameter~$\mathrm{i}$.)

We are interested in relations between reflections. In $\SU(1,1)$, 
these relations have the form $R^{q_n}\ldots R^{q_1}=\varepsilon$,
where $\varepsilon=\pm 1$. 
Note that $R^pR^p=-1$, which is the identity isometry at the level 
of $\PU(1,1)$; we call such relations {\it cancellations}\/. Every length~$2$
relation between reflections is a cancellation. There are no length~$3$ relation
between reflections.

\section{The hyperelliptic group}
\label{sec:hypgroups}

Given $n\geq 5$, we consider the {\it hyperelliptic group} 
$$H_n:=\langle r_1,\ldots,r_n\mid r_i^2=r_n\ldots r_1=1\rangle.$$
Denote by $\mathcal PH_n$ the space of representations $\varrho:H_n\to\PU(1,1)$,
with $\varrho(r_i)\neq 1$ for all indices~$i$. As seen in Section~\ref{sec:hypgeo}, 
every (orientation-preserving) involution of $\mathbb H_{\mathbb C}^1$ is a reflection in a 
point. Thus, given $\varrho\in\mathcal PH_n$, there exist $q_1,\ldots, q_n\in\mathbb D$ such that 
$\varrho(r_i)=R^{q_i}$ for every~$i$. The defining relations of $H_n$ imply
$R^{q_n}\ldots R^{q_1}=\varepsilon$, $\varepsilon=\pm1$,
and this expression is written in terms of~$\SU(1,1)$.

On the other hand, every length~$n$ relation $R^{q_n}\ldots R^{q_1}=\varepsilon$, 
$q_i\in\mathbb D$, produces a representation $\varrho\in\mathcal PH_n$, 
defined by $\varrho(r_i)=R^{q_i}$.
So, there is a one-to-one correspondence between $\mathcal PH_n$ and
the space of length~$n$ relations between reflections.

For $n=2$, we call $H_2$ the {\it cancellation group\/} since every representation
$\varrho:H_2\to\PU(1,1)$, $\varrho(r_i)\neq 1$, of it is given by a cancellation; as before,
the space of such representations is denoted~$\mathcal PH_2$ and we also call its elements
cancellations. 
In what follows, unless otherwise stated, whenever we are considering the group $H_n$, either
$n\geq 5$ and $H_n$ is a hyperelliptic group, or $n=2$ and we are dealing with the cancellation 
group~$H_2$.  

Let $\varrho\in\mathcal PH_n$, and let $q_1,\ldots,q_n\in\mathbb D$ be such that 
$\varrho(r_i)=R^{q_i}$. Given a point $c_0\in\overline{\mathbb D}$, define 
$c_i:=R^{q_i}c_{i-1}$, indices considered modulo~$n$ (clearly, $c_n=c_0$). The list of points 
$c_0,\ldots, c_{n-1}$ is the {\it cycle\/} of $c_0$ under~$\varrho$. Consider the piecewise
geodesic path $C$ given by the cycle of~$c_0$ under~$\varrho$, i.e., the closed path given 
by geodesic segments $\mathrm G[c_i,c_{i+1}]$. 
The oriented area of the polygonal region bounded by such path $C$ (as described in
Section~\ref{sec:hypgeo}) is the {\it area\/} of the representation~$\varrho$ and 
is denoted by~$\Area\varrho$. This area is well defined since it does not depends on $c_0$ 
(see~\cite[Lemma~3.2]{ABG2007}). 
Clearly, the area of a cancellation vanishes.

\begin{prop}[see~\cite{ABG2007}]
\label{prop:arearep}
Let $\varrho\in\mathcal PH_n$, where $n\geq 5$. Then:

$\bullet$ $\Area\varrho=n\pi\ (\mathrm{mod}\,2\pi)$;

$\bullet$ $-(n-4)\pi\leq\Area\varrho\leq(n-4)\pi$. 
\end{prop}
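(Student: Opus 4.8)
The plan is to express $\Area\varrho$ as the Levi--Civita holonomy of the cycle polygon and to extract both assertions from the single algebraic constraint $R^{q_n}\cdots R^{q_1}=\varepsilon$. First I record the geometry of the cycle. Since $R^{q_i}$ is the half-turn (rotation by $\pi$) about $q_i$ and $c_i=R^{q_i}c_{i-1}$, the point $q_i$ is the midpoint of the edge $e_i:=\mathrm G[c_{i-1},c_i]$, and the interior angle of the cycle polygon $C$ at $c_i$ equals $\angle(q_i,c_i,q_{i+1})$. The one computation I need is the local identity $\mathrm dR^{q_i}=-\mathrm P_{e_i}\colon \mathrm T_{c_{i-1}}\mathbb D\to\mathrm T_{c_i}\mathbb D$, where $\mathrm P_{e_i}$ is parallel transport along $e_i$: both maps are orientation-preserving linear isometries, and $R^{q_i}$ reverses the oriented geodesic $e_i$ (carrying the forward velocity at $c_{i-1}$ to the backward velocity at $c_i$, whereas $\mathrm P_{e_i}$ carries forward to forward), so they agree up to the scalar $-1$ on one nonzero vector and therefore coincide.

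For the first bullet I would fix a unit vector $u_0\in\mathrm T_{c_0}\mathbb D$ (with $c_0\in\mathbb D$) and transport it around the cycle by the reflections, $u_i:=\mathrm dR^{q_i}(u_{i-1})$. On the one hand $u_n=\mathrm d(R^{q_n}\cdots R^{q_1})(u_0)=\mathrm d\varepsilon(u_0)=u_0$, since $\varepsilon=\pm1$ acts trivially on $\mathrm T\mathbb D$. On the other hand the identity above gives $u_n=(-1)^n\,\mathrm P_{e_n}\cdots\mathrm P_{e_1}(u_0)$, and $\mathrm P_{e_n}\cdots\mathrm P_{e_1}$ is the holonomy of $C$, a rotation by some angle $h$. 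Comparing the two expressions yields $h+n\pi\equiv 0\pmod{2\pi}$. Finally, since the curvature is $-1$, Stokes' theorem on the spanning disc $\varphi(\mathbb B^2)$ of Section~\ref{sec:hypgeo} identifies the holonomy with the enclosed area, $h\equiv\pm\Area\varrho\pmod{2\pi}$; as $n\pi\equiv-n\pi$, this gives $\Area\varrho\equiv n\pi\pmod{2\pi}$ independently of sign conventions.

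The second bullet then follows almost for free, which is the point I find most appealing. The first bullet says precisely that $\Area\varrho/\pi$ is an \emph{integer} of the same parity as $n$ (in particular $\Area\varrho$ is locally constant in $\varrho$, an Euler-number-type invariant). Independently, the triangulation with apex $c_0$ gives $\Area\varrho=\sum_{i=0}^{n-1}\Area\Delta(c_0,c_i,c_{i+1})$ in which the terms $i=0$ and $i=n-1$ vanish, leaving $n-2$ proper oriented hyperbolic triangles, each of area in the open interval $(-\pi,\pi)$; hence $|\Area\varrho|<(n-2)\pi$ strictly. Combining the two facts, $\Area\varrho/\pi$ is an integer of parity $n$ with absolute value strictly less than $n-2$, and the largest such value in absolute value is $n-4$ (the value $n-2$ is excluded by strictness and $n-3$ by parity). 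This is exactly $-(n-4)\pi\le\Area\varrho\le(n-4)\pi$.

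The main obstacle is entirely in the first bullet: making the holonomy argument rigorous, that is, justifying the derivative identity $\mathrm dR^{q_i}=-\mathrm P_{e_i}$ and the congruence $h\equiv\pm\Area\varrho\pmod{2\pi}$ for the possibly non-embedded, non-convex polygon $C$. I expect the derivative identity to be routine, but the holonomy-area identification must be phrased carefully so that it holds modulo $2\pi$ for an immersed loop, where the turning number would otherwise intervene; the cleanest route is to apply Stokes directly to the singular spanning disc $\varphi(\mathbb B^2)$ rather than arguing through the Gauss--Bonnet turning formula. Once this is secured, the bounds require no Milnor--Wood input: the parity constraint from the first bullet already supplies the extra factor of $2\pi$ that the crude triangle estimate misses.
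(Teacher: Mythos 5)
Your proof is correct, and strictly speaking there is nothing internal to compare it with: the paper states Proposition~\ref{prop:arearep} without proof, importing it from~\cite{ABG2007}, where it is obtained within that paper's computational formalism for triangle areas along the cycle; your flat-connection argument is in any case an independent route. Both halves check out. The identity $\mathrm dR^{q_i}=-\mathrm P_{e_i}$ is indeed routine, and it even survives the degenerate case $c_{i-1}=q_i$ (the edge is a point, $\mathrm P_{e_i}=\mathrm{id}$, while $\mathrm dR^{q_i}=-\mathrm{id}$ at the fixed point); alternatively pick $c_0$ generic. Your flagged obstacle closes exactly along the route you propose: since $\mathbb D$ is parallelizable, fix a global orthonormal frame and let $\alpha$ be the resulting connection $1$-form, so that $d\alpha$ equals the area form (the factor $4$ in~\eqref{eq:hermitianform} is what makes $\mathop{\mathrm{cosh}}^2\big(\frac{\dist}{2}\big)$ match $\tance$ and $K=-1$, so your constants agree with the paper's normalization). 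Relative to this frame, parallel transport along any piecewise smooth path rotates by $-\int\alpha$, corners contributing nothing; hence the holonomy angle of $C$ is $-\oint_C\alpha=-\int_{\mathbb B^2}\varphi^*\,d\alpha=-\Area\varrho$, exactly, by Stokes applied to the coning map $\varphi$ with apex $c_0$. No turning number ever enters, which is why your instinct to avoid the Gauss--Bonnet turning formula is the right one.

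One hypothesis in your second bullet deserves emphasis because it carries the whole weight of the bound: $c_0$ must lie in the \emph{open} disc. This forces every $c_i\in\mathbb D$, hence every triangle area strictly inside $(-\pi,\pi)$, and strictness is essential: $(n-2)\pi$ has the same parity as $n\pi$, so it is excluded only by the strict inequality, not by the congruence. Had you allowed $c_0\in\partial\mathbb D$ (which the paper permits when defining cycles), ideal triangles of area exactly $\pi$ would appear and the argument would not close. You did choose $c_0\in\mathbb D$, so this is a remark, not a gap. As for what each approach buys: the computations of~\cite{ABG2007} mesh with the Gram-product and $i$-cycle machinery reused later for Theorem~\ref{thm:icyclediscr}, while your argument exposes the congruence as a flatness statement and extracts the Milnor--Wood-type bound $\pm(n-4)\pi$ from nothing beyond parity plus the crude $(n-2)\pi$ estimate.
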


We say that a representation $\varrho\in\mathcal PH_n$, $n\geq 5$, is {\it maximal\/} if 
$\Area\varrho=\pm(n-4)\pi$. Clearly, two representations in 
$\mathcal RH_n$ that are $\PU(1,1)$-conjugated have the same area.

Let~$\varrho\in\mathcal PH_n$. Denote by $\varrho^-\in\mathcal PH_n$ the
representation given by $\varrho^-(r_i)=\varrho(r_{n+1-i})$. Then 
$\Area\varrho^-=-\Area\varrho$. In terms of relations, if $\varrho$ is given by
the relation $R^{q_n}\ldots R^{q_1}=\varepsilon$, then $\varrho^{-}$ is given by 
the relation $R^{q_1}\ldots R^{q_n}=\varepsilon$
($R^{q_n}\ldots R^{q_1}=\varepsilon$ clearly implies $R^{q_1}\ldots R^{q_n}=
\varepsilon$).

\subsection{Bendings}
Take distinct points $q_1,q_2\in\mathbb D$. If $q_1',q_2'$ are obtained
by moving $q_1,q_2$ along the geodesic $\mathrm G\wr q_1,q_2\wr$, maintaining
the distance between them, i.e., $\tance(q_1,q_2)=\tance(q_1',q_2')$, then 
$R^{q_2'}R^{q_1'}=R^{q_2}R^{q_1}$; a relation of this form is called a {\it bending relation\/} 
(or {\it simple earthquake\/}). Every length $4$ relation is a bending relation 
(see for instance~\cite{Sasha2012} and~\cite{spell}).

Given a product $R^{q_2}R^{q_1}$, there exists a one parameter subgroup $B:\mathbb R
\to\PU(1,1)$ that performs every bending, i.e., such that: 

\smallskip

$\bullet$ $B(s)$ lies in the centralizer of $R^{q_2}R^{q_1}$ for every $s$; 

\smallskip

$\bullet$ given $q_1',q_2'\in\mathbb D$ with $R^{q_2'}R^{q_1'}=R^{q_2}R^{q_1}$, there
exists $s$ satisfying $B(s)q_i=q_i'$, $i=1,2$. 

\smallskip

\noindent The orbit of $q_1$ (or $q_2$) under such group is the geodesic $\mathrm G\wr q_1,q_2\wr$.

Bendings can be seen as one parameter deformations of representations 
$\varrho\in\mathcal PH_n$. In fact, 
suppose that $\varrho$ is given by the relation $R^{q_n}\ldots R^{q_1}=\varepsilon$.
If $q_i',q_{i+1}'\in\mathbb H_\mathbb C^1$ are such that $R^{q_{i+1}'}R^{q_i'}
=R^{q_{i+1}}R^{q_i}$, we can substitute $R^{q_{i+1}}R^{q_i}$ by $R^{q_{i+1}'}R^{q_i'}$
in the given relation, obtaining a new relation 
$R^{q_n}\ldots R^{q_{i+1}'}R^{q_i'}\ldots R^{q_1}=\varepsilon$.

If $\varrho$ is given by the relation $R^{q_n}\ldots R^{q_1}=\varepsilon$, we will say that 
a representation $\widehat\varrho$ obtained after finitely many 
bendings of $R^{q_n}\ldots R^{q_1}=\varepsilon$ is a {\it bending-deformation}\/ of
$\varrho$. We will also say that $\varrho$ and $\widehat\varrho$ are {\it bending-connected}\/.

\begin{prop}[see~\cite{ABG2007}]
\label{prop:bendarea}
Bending-deformations preserve the area of a representation. Moreover, two 
maximal representations of the same hyperelliptic group are bending-connected.
\end{prop}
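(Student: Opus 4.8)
For the first assertion it is enough to treat a single bending, say at position $i$, since a bending-deformation is a finite composition of such moves. The decisive observation is that the bending replaces the product $R^{q_{i+1}}R^{q_i}$ by an \emph{equal} product $R^{q_{i+1}'}R^{q_i'}$, so the isometry $g:=R^{q_{i+1}}R^{q_i}$ carrying $c_{i-1}$ to $c_{i+1}$ is unchanged. Consequently, in the cycle of a fixed base point $c_0$, every point is unchanged except $c_i$, which moves to $c_i':=R^{q_i'}c_{i-1}$. I would then compute $\Area\varrho$ using the base point $c=c_{i-1}$ (legitimate, since the area does not depend on it): in the sum $\sum_j\Area\Delta(c_{i-1},c_j,c_{j+1})$ every summand with $j\neq i$ is either unaffected by the bending or, for $j=i-1$, identically zero since $\Delta(c_{i-1},c_{i-1},c_i)$ is degenerate. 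Hence the entire variation of the area is carried by the single term $\Area\Delta(c_{i-1},c_i,c_{i+1})$, and it remains to show that this oriented triangle has constant area as $c_i$ runs over the bending orbit.

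To finish the first assertion, recall that the bending is realized by the one-parameter group $B(s)$ in the centralizer of $g$, which is a hyperbolic translation along $\gamma:=\mathrm G\wr q_i,q_{i+1}\wr$ with generating Killing field $X$. Since $R^{q_i}$ is a half-turn about the point $q_i\in\gamma$, it reverses the translation, i.e.\ $(R^{q_i})_*X=-X$; differentiating $c_i(s)=B(s)R^{q_i}B(s)^{-1}c_{i-1}$ at $s=0$ then yields velocity $2X(c_i)$. Because $c_{i+1}=g\,c_{i-1}$ with $g$ flowing along the same axis, a short first-variation computation (via Gauss--Bonnet, $\Area=\pi-\alpha-\beta-\gamma$, or directly from \eqref{eq:hermitianform}) shows that the derivative of $\Area\Delta(c_{i-1},c_i,c_{i+1})$ in the direction $2X(c_i)$ vanishes. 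I expect this vanishing to be the only genuine computation in the first part; note that the two triangles are not congruent (no isometry fixes $c_{i-1},c_{i+1}$ and moves $c_i$ to $c_i'$), so the identity is not explained by a symmetry but is a real cancellation of the angle variations.

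For the second assertion I would first pin down the geometry of maximal representations through the equality case of $\Area\varrho\le(n-4)\pi$ in Proposition~\ref{prop:arearep}. Tracing the proof of that inequality, equality should force the oriented triangles building up the area to share a fixed orientation and to be extremal, which rigidifies the configuration of the $q_i$ up to the finitely many real moduli not already determined. The goal is a normal form: every maximal representation is $\PU(1,1)$-conjugate to one whose defining points $q_1,\dots,q_n$ occupy a standard position, the remaining freedom being carried by the tances $\tance(q_i,q_{i+1})$ and by the relative twists. Representations with area $-(n-4)\pi$ are reduced to the previous case by passing to $\varrho^-$, for which $\Area\varrho^-=-\Area\varrho$.

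It then remains to connect any two maximal representations in this normal form by bendings, keeping the area fixed at $(n-4)\pi$ thanks to the first part. Here I would argue that the bending flows act transitively on the maximal locus: a bending at position $i$ preserves $\tance(q_i,q_{i+1})$ while moving the neighbouring data, so by composing bendings at the various positions one steers a given maximal representation to a chosen canonical one. The main obstacle is precisely this transitivity. One must show that the bending vector fields, together with their Lie brackets, span the tangent directions to the maximal stratum, so that the bending orbit of a maximal representation is open and hence, by connectedness of the maximal locus, exhausts the level set $\{\Area=(n-4)\pi\}$. This reachability statement—rather than the area-preservation of the first part—is the crux of the proposition, and is where I would concentrate the detailed work.
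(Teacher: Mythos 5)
First, note that the paper does not prove this proposition at all: it is stated with the attribution ``see~\cite{ABG2007}'' and imported as known, so there is no internal proof to compare against; your attempt has to be judged on its own.

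Your first part is strategically sound but stops short of a proof at its only substantive point. The reduction is correct: a bending at position $i$ leaves $g=R^{q_{i+1}}R^{q_i}$ and hence every cycle point except $c_i$ unchanged, and with base point $c_{i-1}$ the whole variation of $\Area\varrho$ sits in the single term $\Area\Delta(c_{i-1},c_i,c_{i+1})$; your velocity computation $2X(c_i)$ is also right. But the decisive vanishing of the first variation is merely asserted (``a short first-variation computation \dots shows''), and it is exactly the nontrivial content --- it amounts to the non-obvious fact that the apex locus of constant-area triangles over the base $\{c_{i-1},\,g c_{i-1}\}$ is the equidistant curve of the axis of $g$. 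The claim is true, but you missed two routes that make it free. Cheapest: by Proposition~\ref{prop:arearep}, $\Area\varrho\in n\pi+2\pi\mathbb Z$, a discrete set, while $\Area$ varies continuously along the bending flow; hence it is constant. Alternatively, use the independence of the base point and choose $c_0$ so that $c_{i-1}=w$, an ideal fixed point of the hyperbolic $g$: then $c_i=R^{q_i(s)}w=v$ and $c_{i+1}=w$ for every bending parameter $s$ (half-turns about points of the axis swap $v$ and $w$), so the entire cycle is \emph{pointwise} unchanged and invariance is immediate.

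The second part contains a genuine gap, which you yourself flag: the transitivity of bendings on the maximal locus is exactly the content of the assertion, and your proposal for it is a program, not an argument. Worse, the program as sketched is close to circular. Since $\Area$ is continuous with values in a discrete set, it is locally constant, so the level set $\{\Area=(n-4)\pi\}$ is a union of connected components of $\mathcal PH_n$; its connectedness is therefore essentially \emph{equivalent} to the transitivity you want (given open orbits), not an independent input --- and in this paper's logic the connectedness of $\mathcal TH_n^{\pm}$ is \emph{deduced from} Proposition~\ref{prop:bendarea}, not available before it. The bracket-generation of the $n$ bending fields on the $(2n-3)$-dimensional maximal stratum is likewise a real computation you have not done (single bending directions cannot suffice for $n\geq 5$, as you note implicitly), and one must also check smoothness of the stratum and non-degeneracy of the flows there (on the maximal locus $q_i\neq q_{i+1}$ always holds, which helps, but this needs saying). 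A complete proof would instead follow \cite{ABG2007}: the equality analysis is the positivity of the $i$-cycle (Theorem~\ref{thm:icyclediscr}), and transitivity is established constructively via the parametrization of maximal representations by $2n-6$ cyclically ordered points on the semicircle, steering any such configuration to a standard one by explicit bendings. As it stands, your attempt proves neither assertion in full; the first is fixable in one line, the second is missing its core.
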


A cycle of pairwise distinct isotropic points $p_1,\ldots, p_m\in\partial\mathbb D$, $m\geq 3$, 
is {\it positive\/} (resp. {\it negative\/}) if points are listed 
in counterclockwise (resp. clockwise) sense in the circumference 
$\partial\mathbb D$.

\begin{figure}[h!]
\centering
\includegraphics[scale=.8]{figs/icycle.mps}
\end{figure}

Let $\varrho\in\mathcal PH_n$, $n\geq 5$, be a representation such that $\varrho(r_ir_{i-1})$
is hyperbolic, for every index~$i$, i.e., if $q_i\in\mathbb D$ are points
such that $\varrho(r_i)=R^{q_i}$, then $q_i\neq q_{i+1}$, for every~$i$. 
Given an index~$i$, denote by $v_i,w_i\in\partial\mathbb D$ the attractor and repeller, 
respectively, of the hyperbolic isometry $R^{q_i}R^{q_{i-1}}$. In particular, 
$\mathrm G\wr q_{i-1},q_i\wr=\mathrm G\wr v_i,w_i\wr$. Define $v_i^i:=v_i,w_i^i:=w_i$ and $v_i^j:=R^{q_j}v_i^{j-1}, w_i^j:=R^{q_j}w_i^{j-1}$.
The list $v_i^i,w_i^i,v_i^{i+1},w_i^{i+1},\ldots, v_i^{i+n-3},w_i^{i+n-3}$ is the 
{\it $i$-cycle\/} of $\varrho$.

\begin{thm}[see~\cite{ABG2007}]
\label{thm:icyclediscr}
Let $\varrho\in\mathcal PH_n$, $n\geq 5$. The following are equivalent:

$\bullet$ $\varrho$ is maximal;

$\bullet$ the $i$-cycle of $\varrho$ is positive/negative;

$\bullet$ $\varrho$ is faithful and discrete.
\end{thm}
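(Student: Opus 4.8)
The plan is to prove the three conditions equivalent by establishing the cycle of implications ``maximal $\Rightarrow$ $i$-cycle positive/negative $\Rightarrow$ faithful and discrete $\Rightarrow$ maximal''. Throughout I fix $q_1,\dots,q_n\in\mathbb D$ with $\varrho(r_i)=R^{q_i}$. First I would record that the $i$-cycle is only defined when every $\varrho(r_ir_{i-1})$ is hyperbolic, i.e. when $q_{i-1}\neq q_i$ for all $i$; since two half-turns at the same point compose to $-1$, a coincidence $q_i=q_{i+1}$ would be a cancellation that drops the effective length of the relation, so this nondegeneracy is automatic both in the maximal case and in the discrete faithful case, and may be assumed without loss throughout.

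For ``maximal $\Rightarrow$ positive/negative'' I would argue by an area computation. Using the description of $\Area\varrho$ from Section~\ref{sec:hypgeo} as a sum of signed triangle areas over a boundary cycle, I would rewrite the invariant as a sum of signed angular contributions, one attached to each passage from $v_i^{\,j},w_i^{\,j}$ to the following pair of the $i$-cycle. Proposition~\ref{prop:arearep} identifies $(n-4)\pi$ as the exact supremum of such a sum, and the extremum is attained precisely when all contributions are coherently oriented, i.e. when the $2(n-2)$ boundary points are traversed monotonically around $\partial\mathbb D$; a single ``backward'' step strictly decreases the sum below $(n-4)\pi$. The sign of $\Area\varrho$ then records the counterclockwise versus clockwise sense, giving ``positive'' versus ``negative''. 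The same estimate read in reverse gives the converse ``positive/negative $\Rightarrow$ maximal'', so these two conditions are in fact directly equivalent.

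The implication ``positive/negative $\Rightarrow$ faithful and discrete'' is the geometric heart of the argument and, I expect, the main obstacle. Here I would exploit the monotone $i$-cycle as a ping-pong configuration on $\partial\mathbb D$: being in cyclic order, the $2(n-2)$ points cut the circle into arcs, and the half-turns $R^{q_i}$ (equivalently the hyperbolic elements $R^{q_i}R^{q_{i-1}}$, whose axis endpoints sit among these points) move suitable unions of arcs with the disjointness required for a Maskit-type combination/ping-pong scheme. Equivalently, I would assemble a fundamental polygon bounded by the axes $\mathrm G\wr q_{i-1},q_i\wr$ and invoke Poincaré's polygon theorem for groups with order-two elliptic generators: monotonicity is exactly what forces this polygon to be embedded and the edge-cycle and angle conditions to hold, so that the side pairings realize the abstract presentation of $H_n$. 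Either route yields discreteness together with faithfulness, since the only relations verified are the defining ones $r_i^2=1$ and $r_n\cdots r_1=1$.

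Finally, ``faithful and discrete $\Rightarrow$ maximal'' I would obtain from Gauss--Bonnet. A discrete and faithful $\varrho$ realizes $\varrho(H_n)$ as a cocompact Fuchsian group uniformizing the closed hyperbolic orbifold $S^2(2,\dots,2)$ with $n$ cone points of order $2$, whose orbifold Euler characteristic is $2-\tfrac n2$, so that its area is $(n-4)\pi$; note $(n-4)\pi\equiv n\pi\pmod{2\pi}$, consistently with Proposition~\ref{prop:arearep}. It remains to identify $|\Area\varrho|$ with this orbifold area, which I would do by a direct computation showing that the signed area enclosed by the defining cycle equals, up to sign, that of a fundamental domain. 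The two delicate points are thus the embeddedness verification in the ping-pong/Poincaré step and the orientation bookkeeping in this last identification; the former is where the monotonicity hypothesis does the essential work, while the latter is routine once a coherent orientation is fixed.
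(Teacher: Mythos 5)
You should first be aware that the paper contains no proof of Theorem~\ref{thm:icyclediscr} to compare against: it is imported wholesale from~\cite{ABG2007}, where the argument is deliberately elementary --- explicit triangle-area decompositions, direct control of the cyclic order of the points $v_i^j,w_i^j$ under bendings, and the bending-connectivity of maximal representations --- with no appeal to Poincar\'e's polygon theorem or orbifold Gauss--Bonnet. Your outline is therefore a genuinely different, classical-machinery route, and parts of its skeleton are sound: the nondegeneracy reduction $q_i\neq q_{i+1}$ is correct in both extremal cases, and the Gauss--Bonnet count for $S^2(2,\ldots,2)$ giving area $(n-4)\pi$ is right (though you should add why a discrete faithful image is cocompact: a noncocompact Fuchsian group is virtually free, while $H_n$ contains a closed-surface group of finite index; and the identification $|\Area\varrho|=(n-4)\pi$ is a Milnor--Wood-type equality, not routine ``orientation bookkeeping'').

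The genuine gaps are in the middle implication, which you correctly flag as the heart but do not actually carry out, and one of your two proposed routes is unworkable. First, naive ping-pong for the involution generators cannot succeed: a ping-pong configuration $R^{q_i}X_j\subset X_i$ for $j\neq i$ with pairwise disjoint tables certifies that no nontrivial reduced word in the $r_i$ maps to the identity, i.e.\ that the image is the free product $(\mathbb Z/2)^{*n}$ --- but the reduced word $r_n\cdots r_1$ must map to the identity, so no such configuration exists; any genuine Maskit combination would have to realize the amalgam splitting of $H_n$ over the cyclic subgroup generated by $r_k\cdots r_1$, a scheme you never set up. Second, your Poincar\'e polygon is misidentified: the region ``bounded by the axes $\mathrm G\wr q_{i-1},q_i\wr$,'' with the $q_i$ as vertices, admits no side pairing by the generators, since $R^{q_i}$ sends the side $\mathrm G[q_{i-1},q_i]$ to $\mathrm G[q_i,R^{q_i}q_{i-1}]$, which is not another side of that polygon. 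The correct fundamental polygon is the cycle polygon $c_0,c_1,\ldots,c_{n-1}$ of a point $c_0$ (as in Section~\ref{sec:hypgroups}), each side self-paired by the half-turn at its midpoint $q_i$, with a single vertex cycle whose angle sum must equal $2\pi$ --- which by Gauss--Bonnet for an $n$-gon is exactly the condition area $=(n-4)\pi$, i.e.\ maximality. Even with the right polygon, you must exhibit a $c_0$ whose cycle polygon is embedded with the angle condition, and deriving that from positivity of the $i$-cycle is precisely the content of the theorem; asserting that ``monotonicity is exactly what forces'' it restates the claim rather than proving it. The same criticism applies, more mildly, to the first implication, where the assertion that a single backward step in the cycle strictly lowers the area below $(n-4)\pi$ needs the quantitative area estimates of~\cite{ABG2007} and is stated without proof.
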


We denote by $\mathcal RH_n$ the space of discrete and faithful representations 
$\varrho\in\mathcal PH_n$, and by $\mathcal TH_n$ the Teichm\"uller space of 
$H_n$, i.e., $\mathcal TH_n=\mathcal RH_n/\PU(1,1)$ (the space of discrete
and faithful representations of $H_n$ up to $\PU(1,1)$-conjugation).
By Proposition~\ref{prop:bendarea} and Theorem~\ref{thm:icyclediscr}, the
space $\mathcal TH_n$ has two connected components $\mathcal TH_n^\pm$, corresponding
to representations of maximal area $\pm(n-4)\pi$, and bending-deformations act
transitively in each of these components. 

\smallskip

Now, we present the description of the space $\mathcal TH_n^\pm$
obtained in~\cite{ABG2007}. Consider $\overline{\mathbb D}$ identified with the closed
disc $\{z\in\mathbb C\mid |z|\leq 1\}$ and denote by $\mathbb S^1_+$ the upper
semicircle $\{z\in\partial\mathbb D\mid\imag z>0\}$.
Let~$n\geq 5$. The space $\mathcal TH_n^+$ can be parametrized as
the space of points $z_1,\ldots, z_{2n-6}\in\mathbb S^1_+$ that form a 
positive cycle in $\partial\mathbb D$. (For the description of $\mathcal TH_n^-$,
we consider negative cycles in the lower semicircle $\mathbb S^1_-$.)
In fact, on one hand we have Theorem~\ref{thm:icyclediscr}, and we can conjugate 
the representation so that the $i$-cycle $v_i^i,w_i^i,v_i^{i+1},w_i^{i+1},\ldots,
v_i^{i+n-3},w_i^{i+n-3}$ satisfies $v_i^i=-1$ and $w_i^i=1$, obtaining $2n-6$ points
in $\mathbb S^1_+$. On the other hand, given points $z_1,\ldots, z_{2n-6}\in\mathbb S^1_+$
forming a positive cycle, we can obtain a relation $R^{q_n}\ldots R^{q_1}=\pm \varepsilon$ having
as $i$-cycle the list $-1,1,z_1,\ldots, z_{2n-6}$ as follows: 
define $q_i:=0$, $q_{i+1}:=\mathrm G\wr -1,z_1\wr\cap
\mathrm G\wr 1,z_2\wr$, $q_{i+k}:=\mathrm G\wr z_{2k-3},z_{2k-1}\wr\cap
\mathrm G\wr z_{2k-2},z_{2k}\wr$ for $k=2\ldots,n-3$, and $q_{i+n-2}:=
\mathrm G\wr z_{2n-7},-1\wr\cap\mathrm G\wr z_{2n-6},1\wr$. Now we observe that the isometry
$I=R^{q_{i_n-1}}\ldots R^{q_{i+1}}R^{q_i}$ is a hyperbolic isometry with fixed 
points $\pm1\in\partial\mathbb D$ (see the proof of \cite[Corollary~3.17]{ABG2007} for details).
Hence, there exists a point $q_{i+n-1}$ in the geodesics $\mathrm G\wr -1,1\wr$ such that
$R^{q_{i+n-1}}\ldots R^{q_{i+1}}R^{q_i}=\pm1$. Note that the points $q_{i+n-1}$ must lie
to the left of $q_i$ and, since the isometry $R^{q_{i+n-1}}R^{q_i}$ depends only on
$\tance(q_{i},q_{i+n-1})$, it is computationally easy to find such point.

\begin{figure}[h!]
\centering
\includegraphics[scale=.8]{figs/icycle2.mps}
\end{figure}

It should be noted that other descriptions for the Teichm\"uller space of hyperelliptic surfaces
can be found in \cite{DB2005} (for the case of genus $g=2$) and in \cite{haeringen2020} for the
general case, where they associate to each hyperelliptic surface an 
{\it admissible\/} or {\it hyperelliptic polygons\/}. In this language, the process of taking
$2n-6$ in cyclic order in the semicircle is a way of constructing admissible polygons.

\section{Basic relations and discrete representations}

Starting with a relation $R^{q_n}\ldots R^{q_1}=\varepsilon$, where $q_1,\ldots,q_n\in\mathbb D$,
it could be that, after finitely many bendings, we arrive at a situation where we can use
cancellations to reduce the length of such relation, e.g., we can arrive at a relation
$R^{q_n'}\ldots R^{q_{i+1}'}R^{q_i'}\ldots R^{q_1'}=\varepsilon$ where
$q_{i+1}'=q_i'$; using a cancellation, we obtain a new relation
$R^{q_n'}\ldots R^{q_{i+2}'}R^{q_{i-1}'}\ldots R^{q_1'}=\varepsilon$ of length~$n-2$.
More generally, it could be that, after finitely many bendings, we obtain that the given
relation contains a relation of smaller length, i.e., is the {\it concatenation\/} of
two relations, and we can again reduce the length of the relation. We refer to relations that 
cannot have their length reduced in this way as {\it basic}.
In what follows, we want to formalize this process and understand when a relation
(or, equivalently, a representation in $\mathcal PH_n$) is basic, and what
being basic or not can tell us about the representation.

\subsection{Concatenating representations}
\label{subsec:concat}
Let $\varrho_1$ and $\varrho_2$ be two $\PU(1,1)$-representations, one of $H_m$ and the
other of $H_n$, given by relations $R^{p_m}\ldots R^{p_1}=\varepsilon_1$ and
$R^{q_n}\ldots R^{q_1}=\varepsilon_2$, respectively. Using $\varrho_1,\varrho_2$,
we obtain new representations by concatenating 
$\varrho_1$ and $\varrho_2$ in different ways: given an index $i=0,1,\ldots,n$,
we define a representation $\varrho_1\odot_i\varrho_2$ as being given by 
the relation 
$$
R^{q_n}\ldots R^{q_{i+1}}R^{p_m}\ldots R^{p_1}R^{q_i}\ldots R^{q_1}=
\varepsilon_1\varepsilon_2.$$
This well defines an operation $\mathcal PH_m\odot_i\mathcal PH_n\to\mathcal PH_{m+n}$,
called {\it concatenation\/}, for every index $i=0,1,\ldots,n$. Note that,
$$\varrho_1\odot_i(\varrho_2\odot_j\varrho_3)=\varrho_1\odot_{i+j}(\varrho_2
\odot_j\varrho_3).$$
In particular, $\varrho_1\odot_0(\varrho_2\odot_0\varrho_3)=
(\varrho_1\odot_0\varrho_2)\odot_0\varrho_3$.

\begin{defi}
\label{defi:basic}
A representation $\varrho\in\mathcal PH_n$, $n\geq 5$, is {\it basic\/} if 
it cannot be connected by finitely many bending-deformations with a
representation of the form~$\varrho_1\odot_i\varrho_2$,
for some representations $\varrho_1\in\mathcal PH_k$ and $\varrho_2\in\mathcal PH_\ell$,
where $k+\ell=n$, and some $i=0,1,\ldots, \ell$. 
\end{defi}

In terms of relations, a representation given by $R^{q_n}\ldots R^{q_1}=\varepsilon$ is 
basic if, up to finitely many bendings, this relation is not a concatenation
of relations of smaller length.

Note that, if $\varrho\in\mathcal PH_n$ is not basic then, up to $\PU(1,1)$-conjugation and
finitely many bending-deformations, we can assume $\varrho=\varrho_1\odot_0\varrho_2$. In fact,
we have $\varrho$ given by a relation of the form 
$R^{q_\ell}\ldots R^{q_{i+1}}R^{p_k}\ldots R^{p_1}R^{q_i}\ldots R^{q_1}=\varepsilon$.
Conjugating by $I:=R^{q_i}\ldots R^{q_1}$, we obtain the relation
$$R^{q_i}\ldots R^{q_1}R^{q_\ell}\ldots R^{q_{i+1}}R^{p_k}\ldots R^{p_1}=\varepsilon,$$
that produces a representation of the form $\varrho_1\odot_0\varrho_2$, where
$\varrho_1\in\mathcal PH_k$ and $\varrho_2\in\mathcal PH_\ell$.

\begin{prop}
\label{prop:areaproduct}
$\Area(\varrho_1\odot_i\varrho_2)=\Area\varrho_1+\Area\varrho_2$.
\end{prop}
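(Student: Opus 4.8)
The plan is to exploit the description of area recalled in Section~\ref{sec:hypgeo}: the area of a representation equals the sum of oriented triangle areas $\sum_k\Area\Delta(c,c_k,c_{k+1})$ taken over the edges of a cycle, and this sum depends neither on the chosen apex~$c$ nor on the base point~$c_0$ generating the cycle. First I would fix a base point $c_0\in\overline{\mathbb D}$ and trace its cycle under $\varrho_1\odot_i\varrho_2$, whose defining relation is $R^{q_n}\ldots R^{q_{i+1}}R^{p_m}\ldots R^{p_1}R^{q_i}\ldots R^{q_1}=\varepsilon_1\varepsilon_2$. Writing $c_k$ for the image of $c_0$ after applying the first~$k$ reflections, the decisive observation is that the block $R^{p_m}\ldots R^{p_1}$ returns us to the same point: since $R^{p_m}\ldots R^{p_1}=\varepsilon_1=\pm1$ acts trivially in $\PU(1,1)$, we have $c_{i+m}=c_i$.

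Next I would recognize the two natural closed loops hidden inside this cycle. The middle block $c_i,c_{i+1},\ldots,c_{i+m}=c_i$ is already closed, and since $c_{i+j}=R^{p_j}\ldots R^{p_1}c_i$ it is exactly the cycle of the point $c_i$ under $\varrho_1$. Deleting this block and joining $c_i$ directly to $c_{i+m+1}$, the remaining vertices $c_0,c_1,\ldots,c_i,c_{i+m+1},\ldots,c_{m+n}=c_0$ form a closed path which, after the identification $c_{i+m+\ell}=R^{q_{i+\ell}}\ldots R^{q_{i+1}}c_i$, is precisely the cycle of $c_0$ under $\varrho_2$. Thus the $m+n$ edges of the concatenated cycle partition into the $m$ edges of the $\varrho_1$-loop based at $c_i$ and the $n$ edges of the $\varrho_2$-loop based at $c_0$.

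Finally I would assemble the areas. Choosing one common apex~$c$ for all triangles and splitting the sum $\Area(\varrho_1\odot_i\varrho_2)=\sum_k\Area\Delta(c,c_k,c_{k+1})$ along this partition of edges, the triangles over the middle block sum to the area of the $\varrho_1$-cycle based at $c_i$ computed with apex~$c$, while the triangles over the remaining edges sum to the area of the $\varrho_2$-cycle based at $c_0$ computed with the same apex~$c$. By the base-point and apex independence of the area, these two sub-sums equal $\Area\varrho_1$ and $\Area\varrho_2$ respectively, which gives $\Area(\varrho_1\odot_i\varrho_2)=\Area\varrho_1+\Area\varrho_2$.

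The only genuine subtlety, and the step I would treat most carefully, is verifying that the two sub-paths really close up into the cycles of $\varrho_1$ and $\varrho_2$; once the identity $c_{i+m}=c_i$ is in hand this reduces to a bookkeeping check on the indices. Everything else is forced by the additivity of the triangle-area sum together with the independence statements already established, so no delicate estimate is needed.
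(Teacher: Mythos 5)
Your proposal is correct and follows essentially the same route as the paper's proof: both decompose the cycle of $c_0$ under $\varrho_1\odot_i\varrho_2$ into the closed sub-cycle of $c_i$ under $\varrho_1$ (using $c_{i+m}=c_i$) and the remaining cycle of $c_0$ under $\varrho_2$, then add the two polygonal areas. Your explicit splitting of the triangle-area sum $\sum_k\Area\Delta(c,c_k,c_{k+1})$ with a common apex just makes precise the bookkeeping that the paper leaves implicit.
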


\begin{proof}
Suppose that $\varrho_1\in\mathcal PH_m$ and $\varrho_2\in\mathcal PH_n$, and let 
$c_0\in\overline{\mathbb D}$. If $c_0,\ldots, c_{m+n-1}$ is the cycle of
$c_0$ under $\varrho_1\odot_i\varrho_2$, then $c_i,\ldots c_{m-1}$ is the cycle
of $c_i$ under $\varrho_1$ (in particular, $c_{i+m}=c_i$). Moreover,
$c_0,\ldots, c_i, c_{i+m+1},\ldots, c_{m+n-1}$ is the cycle of $c_0$ under $\varrho_2$.
Therefore, $\Area(\varrho_1\odot_i\varrho_2)$ is the sum of the oriented area
of the geodesic polygon with vertices $c_i,\ldots c_{m-1}$ and the one with vertices
$c_0,\ldots, c_i, c_{i+m+1},\ldots, c_{m+n-1}$, which is exactly, by definition, 
$\Area\varrho_1+\Area\varrho_2$. 
\end{proof}

\begin{cor}
\label{cor:notdf}
If\/ $\varrho\in\mathcal RH_n$, then $\varrho$ is basic.
\end{cor}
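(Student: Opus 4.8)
The plan is to argue by contradiction, using the area as an obstruction. Suppose $\varrho\in\mathcal RH_n$ were not basic. By Definition~\ref{defi:basic}, $\varrho$ would then be bending-connected to a concatenation $\varrho_1\odot_i\varrho_2$ with $\varrho_1\in\mathcal PH_k$, $\varrho_2\in\mathcal PH_\ell$ and $k+\ell=n$. Since $\varrho$ is discrete and faithful, Theorem~\ref{thm:icyclediscr} tells us it is maximal, so $\Area\varrho=\pm(n-4)\pi$; I will derive a contradiction by showing that any such concatenation must have strictly smaller area in absolute value.

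First I would record the area identity for the hypothetical decomposition. By Proposition~\ref{prop:bendarea}, bending-deformations preserve area, so $\Area\varrho=\Area(\varrho_1\odot_i\varrho_2)$, and by Proposition~\ref{prop:areaproduct} this equals $\Area\varrho_1+\Area\varrho_2$. It then remains to bound the two summands. The admissible indices are $k,\ell\in\{2\}\cup\{5,6,7,\ldots\}$ (the only values for which $H_k,H_\ell$ are defined), and since $k+\ell=n\geq5$ with both parts admissible, the only possibilities are that both $k,\ell\geq5$ or that exactly one of them equals~$2$.

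Next I would run the short case analysis. If both $k,\ell\geq5$, Proposition~\ref{prop:arearep} gives $|\Area\varrho_1|\leq(k-4)\pi$ and $|\Area\varrho_2|\leq(\ell-4)\pi$, whence by the triangle inequality $|\Area\varrho|\leq(k+\ell-8)\pi=(n-8)\pi$. If instead one factor, say $\varrho_2$, is a cancellation ($\ell=2$), then $\Area\varrho_2=0$ and $k=n-2\geq5$, so Proposition~\ref{prop:arearep} gives $|\Area\varrho|=|\Area\varrho_1|\leq(k-4)\pi=(n-6)\pi$. In either case $|\Area\varrho|<(n-4)\pi$, contradicting maximality. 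Hence no such decomposition exists and $\varrho$ is basic.

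Since the whole argument is a straightforward combination of the area formulas, I do not expect a genuine obstacle; the only point requiring care is the bookkeeping of which splittings $k+\ell=n$ are admissible—in particular isolating the cancellation factor $\ell=2$ and checking that the complementary part $n-2$ is then at least~$5$, so that the bound of Proposition~\ref{prop:arearep} applies. (For $n=5,6$ no admissible splitting exists at all, so those cases are basic vacuously, consistent with the area bound.)
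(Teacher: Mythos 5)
Your proof is correct and is essentially the paper's own argument: bendings preserve area (Proposition~\ref{prop:bendarea}), area is additive under concatenation (Proposition~\ref{prop:areaproduct}), and the bounds of Proposition~\ref{prop:arearep} force $|\Area\varrho|<(n-4)\pi$, contradicting the maximality that Theorem~\ref{thm:icyclediscr} guarantees for discrete and faithful representations. If anything, your explicit case analysis is slightly more careful than the paper's one-line estimate, which tacitly applies Proposition~\ref{prop:arearep} to both factors (yielding $(k+\ell-8)\pi$) even when one factor is a cancellation, where the correct bound is $(n-6)\pi$ --- still strictly below $(n-4)\pi$, as you check.
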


\begin{proof}
Suppose that $\varrho$ is not basic. By Propositions~\ref{prop:bendarea} 
and~\ref{prop:areaproduct}, $\Area\varrho=\Area\varrho_1+\Area\varrho_2$ for some
representations $\varrho_1\in\mathcal PH_k$ and $\varrho_2\in\mathcal PH_\ell$, $k+\ell=n$.
Thus, by Proposition~\ref{prop:arearep},
$$-(n-4)\pi<-(k+\ell-8)\pi\leq\Area\varrho\leq(k+\ell-8)\pi<(n-4)\pi,$$
and by Theorem~\ref{thm:icyclediscr}, $\varrho$ is not faithful and discrete.
\end{proof}

It is possible for the area of a representation to be not maximal while the representation
to be discrete.
For instance, consider a representation $\varrho\in\mathcal PH_5$ given by a pentagon 
$R^{p_5}\dots R^{p_1}=\pm1$. Then,
the representation $\varrho^-\odot_{i}\varrho\in\mathcal PH_{10}$ satisfies 
$\Area(\varrho^-\odot_i\varrho)=0$, for any index~$i$
(Proposition~\ref{prop:areaproduct}).
Since every pentagon is discrete, $\varrho$ is discrete which implies that
$\varrho^-\odot_i\varrho$ is discrete (but it is clearly
not faithful).

\begin{defi}
Given $\varrho\in\mathcal PH_n$ and $q_i\in\mathbb D$ such that $\varrho(r_i)
=R^{q_i}$, we say that $\mathrm G\wr q_{k-1},q_k\wr$ and $\mathrm G\wr q_{\ell-1},q_\ell\wr$ 
are {\it neighboring\/} geodesics with respect to $\varrho$ if either $i=j-1$ or $i=j$ or $i=j+1$ 
(indices are module~$n$).  
\end{defi}

\begin{lemma}
\label{lemma:nonneigh}
If non-neighboring geodesics with respect to $\varrho\in\mathcal PH_n$ intersect, 
then\/~$\varrho$ is not basic. More precisely, if\/ $q_i\in\mathbb D$
are such that $\varrho(r_i)=R^{q_i}$ then, up to finitely many bendings, we can reduce
the length of the relation $R^{q_n}\ldots R^{q_1}=\pm1$ by a cancellation.
\end{lemma}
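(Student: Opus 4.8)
The plan is to prove the sharper ``more precisely'' assertion, since producing a cancellation reduces the length to $n-2$ and thereby exhibits $\varrho$, up to finitely many bendings, as a concatenation $\varrho_1\odot_i\varrho_2$ with one factor a cancellation in $\mathcal PH_2$; in particular $\varrho$ is then not basic. Write $g_k:=\mathrm G\wr q_{k-1},q_k\wr$ for the geodesic of index $k$, and suppose $g_i$ and $g_j$ are non-neighboring and meet at a point $x\in\mathbb D$. The key bookkeeping fact is that bending the pair $(q_{k-1},q_k)$ slides $q_{k-1},q_k$ along $g_k$ (leaving $g_k$ itself invariant), alters only $g_{k-1}$ and $g_{k+1}$, and fixes every geodesic of index outside $\{k-1,k,k+1\}$. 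Hence a cancellation will follow as soon as we can force two \emph{adjacent} centers to coincide, and the argument proceeds by induction on the cyclic index-distance $d$ between $g_i$ and $g_j$ (so $d\geq 2$ by non-neighboringness).

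For the base case $d=2$, say $j=i+2$, note that $q_i$ is an endpoint of $g_i$ while $q_{i+1}$ is an endpoint of $g_{i+2}$, and that these two centers lie in different, non-neighboring pairs. I would bend the pair $(q_{i-1},q_i)$ to slide $q_i$ onto $x=g_i\cap g_{i+2}$, and independently bend the pair $(q_{i+1},q_{i+2})$ to slide $q_{i+1}$ onto the same point $x$. Since the first bending touches only the centers of indices $i-1,i$ and the second only those of indices $i+1,i+2$, neither disturbs the geodesic used by the other ($g_{i+2}$ and $g_i$, respectively) nor the intersection point $x$. After both bendings $q_i=q_{i+1}=x$, so $R^{q_{i+1}}R^{q_i}=R^xR^x=-1$ is a cancellation, reducing the length by $2$.

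For the inductive step $d\geq 3$, I would bend the pair $(q_{i-1},q_i)$ along $g_i$ so as to move $q_i$ onto $x$. Because $d\geq 3$ the moved indices $\{i-1,i\}$ are disjoint from $\{j-1,j\}$, so $g_j$ and the point $x\in g_j$ are left untouched; meanwhile $q_{i+1}$ is not moved, so the new geodesic $g_{i+1}=\mathrm G\wr x,q_{i+1}\wr$ now passes through $x$. Thus $g_{i+1}$ and $g_j$ are non-neighboring geodesics, of index-distance $d-1\geq 2$, meeting at the interior point $x$, and the induction hypothesis applies to this pair to yield a cancellation after finitely many further bendings. Composing these with the bendings already performed proves the claim.

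The main obstacle is controlling degeneracies so that every intersection is a single transversal point of $\mathbb D$ and every slide is legitimate. I must ensure that the intersection point is never one of the reflection centers (so that the geodesic $\mathrm G\wr x,q_{i+1}\wr$ genuinely appears and a center can be slid onto $x$ while all points remain inside $\mathbb D$), and that after the slide $g_{i+1}\neq g_j$ as lines, so that they cross transversally rather than coincide. Since two distinct geodesics of $\overline{\mathbb D}$ meet in at most one point, transversality at an interior meeting point is automatic once coincidence is excluded; the remaining coincidence and collinearity cases are non-generic, and I would dispose of them by a preliminary small bending that perturbs the offending centers off the common geodesic before running the induction.
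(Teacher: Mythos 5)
Your proposal is correct and takes essentially the same route as the paper: both arguments descend on the cyclic index gap with the identical base case $d=2$ (two bendings sliding $q_i$ and $q_{i+1}$ to the common point), the only difference being that your inductive step advances the near geodesic by sliding $q_i$ onto $x$ so that $\mathrm G\wr q_i,q_{i+1}\wr$ passes through $x\in\mathrm G\wr q_{j-1},q_j\wr$, whereas the paper retreats the far geodesic by bending $R^{q_{i+k+1}}R^{q_{i+k}}$ until $q_{i+k-1}$ and $q_{i+k}$ lie on opposite sides of $\mathrm G\wr q_{i-1},q_i\wr$. The degeneracies you flag at the end are benign and need no preliminary perturbation: if $x$ coincides with $q_{i+1}$ you already have equal adjacent centers, hence a cancellation, and if the two lines coincide any common interior point serves as $x$.
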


\begin{proof}
If $\mathrm G\wr q_{i-1},q_i\wr$ intersects $\mathrm G\wr q_{i+1},q_{i+2}\wr$ then, by bending
$R^{q_{i+2}}R^{q_{i+1}}$, we arrive at a configuration where 
$q_{i+1}\in\mathrm G\wr q_{i-1},q_{i}\wr$ (here and in what follows, abusing notation, we 
denote the new centers obtained by bending $R^{q_{j+1}}R^{q_{j}}$ again by $q_{j}$,$q_{j+1}$).
Now, bending $R^{q_i}R^{q_{i-1}}$, we make $q_{i}=q_{i+1}$.

Suppose that $\mathrm G\wr q_{i-1},q_i\wr,\mathrm G\wr q_{i+k},q_{i+k+1}\wr$ are 
non-neighboring intersecting geodesics, $k\geq 2$.
By bending $R^{q_{i+k+1}}R^{q_{i+k}}$ if necessary, we can assume, without loss of generality, 
that $q_{i+k-1}$ and $q_{i+k}$ lie in opposite sides of the geodesic 
$\mathrm G\wr q_{i-1},q_i\wr$. This implies that $\mathrm G\wr q_{i-1},q_i\wr$ and 
$\mathrm G\wr q_{i+k-1},q_{i+k}\wr$ are non-neighboring intersection geodesics. Therefore,
repeating the argument if necessary, we arrive in the case of the previous paragraph.
\end{proof}

Let $p_1,p_2\in\overline{\mathbb D}$ be distinct points.
Considering the geodesic $\mathrm G\wr p_1,p_2\wr$ as oriented from $p_1$ to $p_2$, the dis 
$\mathbb D$ is divided into two half-spaces: the set $H^+(p_1,p_2)$ 
of the point that are on the side of the normal vector of 
$\mathrm G\wr p_1,p_2\wr$, and the set $H^-(p_1,p_2)$ of points on the side opposed 
to the normal vector of $\mathrm G\wr p_1,p_2\wr$. We denote 
by~$\overline H^{\pm}(p_1,p_2)$ the closure of these half-spaces 
in~$\overline{\mathbb D}$.

\begin{figure}[h!]
\centering
\includegraphics[scale=.8]{figs/halfspace.mps}
\end{figure}

\begin{lemma}
\label{lemma:opphalf}
Let\/ $\varrho\in\mathcal PH_n$ and\/ $q_i\in\mathbb D$ be such that\/
$\varrho(r_i)=R^{q_i}$, $i=1,\ldots, n$. If there exist indices\/ $j,k,\ell$ 
{\rm(}indices modulo~$n${\rm)} such
that\/ $q_k$ and\/ $q_{\ell}$ lie in opposite sides of\/ 
$\mathrm G\wr q_j,q_{j+1}\wr$, i.e., $q_k\in H^+(q_j,q_{j+1})$ and\/ 
$q_\ell\in H^{-}(q_j,q_{j+1})$ then, up to finitely many bendings, we can reduce the length of 
the relation\/ $R^{q_n}\ldots R^{q_1}=\pm1$ by a cancellation and\/ $\varrho$ is not basic.
\end{lemma}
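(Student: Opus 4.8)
The plan is to reduce the statement to Lemma~\ref{lemma:nonneigh}: I will show that the hypothesis forces two non-neighboring geodesics to intersect inside $\mathbb{D}$, after which Lemma~\ref{lemma:nonneigh} produces the desired cancellation (up to finitely many bendings) and the non-basicness of $\varrho$. Write $g:=\mathrm{G}\wr q_j,q_{j+1}\wr$. Since $q_k\in H^+(q_j,q_{j+1})$ and $q_\ell\in H^-(q_j,q_{j+1})$ lie in open half-spaces disjoint from $g$, both indices $k,\ell$ differ from $j$ and $j+1$; thus among the $n-2$ ``free'' centers $q_{j+2},q_{j+3},\ldots,q_{j+n-1}$ (those other than the two endpoints $q_j,q_{j+1}$ of $g$) at least one lies strictly in $H^+$ and at least one strictly in $H^-$.

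First I would treat the generic configuration, in which no free center lies exactly on $g$. In this case the only edges $\mathrm{G}\wr q_i,q_{i+1}\wr$ having an endpoint on $g$ are the three neighboring ones, namely $\mathrm{G}\wr q_{j-1},q_j\wr$, $g$ itself, and $\mathrm{G}\wr q_{j+1},q_{j+2}\wr$. Consequently every non-neighboring edge joins two free centers, each lying strictly in $H^+$ or $H^-$. The free centers $q_{j+2},\ldots,q_{j+n-1}$ are joined consecutively by the non-neighboring edges $\mathrm{G}\wr q_{j+2},q_{j+3}\wr,\ldots,\mathrm{G}\wr q_{j+n-2},q_{j+n-1}\wr$, forming a path; since this path has vertices strictly on both sides of $g$, two consecutive free centers $q_i,q_{i+1}$ must lie on opposite sides. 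The geodesic segment $\mathrm{G}[q_i,q_{i+1}]$ joins two interior points of $\mathbb{D}$ separated by $g$, hence crosses $g$ at a point of $\mathbb{D}$; as $\mathrm{G}\wr q_i,q_{i+1}\wr$ is non-neighboring to $g$, Lemma~\ref{lemma:nonneigh} applies.

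It remains to dispose of the degenerate configurations, in which some free center lies on $g$. Since the free centers include points strictly off $g$ (such as $q_k$ and $q_\ell$) and form a connected path, one finds along this path a non-neighboring edge $\mathrm{G}\wr q_i,q_{i+1}\wr$ with one endpoint on $g$ and the other strictly off $g$. This geodesic differs from $g$ (it contains a point off $g$) yet passes through a point of $g\cap\mathbb{D}$, so again two non-neighboring geodesics intersect in $\mathbb{D}$ and Lemma~\ref{lemma:nonneigh} finishes the argument. Alternatively, a single preliminary bending pushing the offending center off $g$ returns one to the generic case.

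The main obstacle I anticipate is exactly this bookkeeping of which edge realizes the crossing and the verification that it is non-neighboring: one must check that the sign change along the free-center path never gets trapped on one of the three neighboring edges $\mathrm{G}\wr q_{j-1},q_j\wr$, $g$, $\mathrm{G}\wr q_{j+1},q_{j+2}\wr$, which is precisely where the hypothesis $q_j,q_{j+1}\in g$ (so that the two endpoints of $g$ contribute no sign) is used. Once the crossing is located on a genuinely non-neighboring geodesic, everything reduces cleanly to Lemma~\ref{lemma:nonneigh}.
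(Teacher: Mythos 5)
Your proof is correct, but it takes a genuinely different route from the paper's. You run a discrete intermediate-value argument along the cyclic sequence of centers: the $n-2$ centers other than $q_j,q_{j+1}$ form a path whose connecting edges all have index differing from that of $g:=\mathrm G\wr q_j,q_{j+1}\wr$ by at least $2$ modulo $n$ (here $n\geq 5$ is used), so the sign change between $q_k$ and $q_\ell$ is necessarily realized on a non-neighboring edge whose segment crosses $g$ at an interior point of $\mathbb D$, and Lemma~\ref{lemma:nonneigh} applies with no preliminary bending at all; your maximal-run patch for the degenerate case is also sound, since the edge leaving a maximal run of centers lying on $g$ has one endpoint strictly off $g$, so its geodesic is distinct from $g$ and genuinely meets it in $\mathbb D$. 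The paper argues locally instead: it inspects only $q_{j-1}$, and (after dismissing the case $q_{j-1}\in g$, where a bending of $R^{q_{j+1}}R^{q_j}$ produces $q_j=q_{j-1}$ and an immediate cancellation) bends $R^{q_{j+1}}R^{q_j}$ so as to move $q_j$ to the point where $\mathrm G\wr q_{j-1},q_\ell\wr$ (or $\mathrm G\wr q_{j-1},q_k\wr$, depending on the side containing $q_{j-1}$) crosses $g$; the new edge $\mathrm G\wr q_{j-1},q_j\wr$ then passes through $q_\ell$, hence meets $\mathrm G\wr q_{\ell-1},q_\ell\wr$, and a short case analysis on whether this pair is neighboring reduces everything to Lemma~\ref{lemma:nonneigh}. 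What each approach buys: yours is bending-free before the reduction, never uses the auxiliary center $q_{j-1}$, and avoids the paper's case analysis, at the price of the global bookkeeping you flag (which you do carry out correctly); the paper's argument examines only four centers and specifies explicitly which single bending to perform, which fits the constructive spirit of the surrounding proofs. One small caveat: your throwaway alternative for the degenerate case (``a single preliminary bending pushing the offending center off $g$'') is under-specified, since if the offending edge lies along $g$ itself then bending that pair only moves its centers within $g$; this is harmless, because your primary run argument already covers that configuration (the edge at the end of a maximal run never lies along $g$).
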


\begin{proof}
Consider the point $q_{j-1}$ (index modulo~$n$). If $q_{j-1}\in
\mathrm G\wr q_j,q_{j+1}\wr$ then, by bending $R^{q_{j+1}}R^{q_j}$, we can make
$q_j=q_{j-1}$, and the result follows.

By hypothesis, if $q_{j-1}\in
H^{+}(q_{j},q_{j+1})$, then $\mathrm G\wr q_{j-1}, q_\ell\wr$ intersects
$\mathrm G\wr q_j,q_{j+1}\wr$ at a point $x$. Thus, by bending $R^{q_{j+1}}R^{q_j}$,
we can make $q_j=x$ and arrive at a configuration where 
$\mathrm G\wr q_{j-1},q_j\wr$ contains $q_{\ell}$ and, hence, intersects 
$\mathrm G\wr q_{\ell-1},q_\ell\wr$. 
Note that, since $q_\ell\notin\mathrm G\wr q_j,q_{j+1}\wr$, if 
$\mathrm G\wr q_{j-1},q_j\wr$ and $\mathrm G\wr q_{\ell-1},q_\ell\wr$ are 
neighboring geodesics with respect to~$\varrho$, then $q_{\ell-1}=q_{j-1}$; in this
case, $\mathrm G\wr q_{j},q_{j+1}\wr$ and $\mathrm G\wr q_{\ell-1},q_\ell\wr$ are
non-neighboring intersecting geodesics, and the result follows from 
Lemma~\ref{lemma:nonneigh}. But, if $\mathrm G\wr q_{j-1},q_j\wr$
and $\mathrm G\wr q_{\ell-1},q_\ell\wr$ are non-neighboring with respect
to $\varrho$, the result follows again from Lemma~\ref{lemma:nonneigh}.

Finally, if $q_{j-1}\in H^-(q_j,q_{j+1})$, we just consider $q_k$ instead of 
$q_{\ell}$, and the proof follows as in the previous case. 
\end{proof}

\begin{thm}
\label{thm:main}
A representation $\varrho\in\mathcal PH_n$ is basic iff $\varrho$ is discrete and
faithful. 
\end{thm}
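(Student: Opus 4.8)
The plan is to prove the two implications separately, first observing that by Theorem~\ref{thm:icyclediscr} a representation is discrete and faithful exactly when it is maximal; thus the statement is equivalent to ``$\varrho$ is basic $\iff$ $\varrho$ is maximal''. One direction is already in hand: if $\varrho$ is discrete and faithful, then it is basic by Corollary~\ref{cor:notdf}, so it remains to prove the converse. I would argue by contraposition, showing that a non-maximal representation is never basic; equivalently, that every basic representation is maximal.

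So assume $\varrho\in\mathcal PH_n$ is basic, with $\varrho(r_i)=R^{q_i}$. The first step is to extract strong geometric constraints on the centers $q_1,\ldots,q_n$ from the fact that $\varrho$ admits no length reduction. Indeed, Lemma~\ref{lemma:opphalf} asserts that if some edge-geodesic $\mathrm G\wr q_j,q_{j+1}\wr$ has centers on both of its open sides, then $\varrho$ is not basic; contrapositively, for a basic $\varrho$ every such geodesic keeps all of $q_1,\ldots,q_n$ in a single closed half-space $\overline H^{\pm}(q_j,q_{j+1})$. Likewise Lemma~\ref{lemma:nonneigh} forbids non-neighboring edges from crossing, so the path is simple. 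Together these two facts say precisely that the closed path $q_1q_2\cdots q_n$ is a (possibly weakly) \emph{convex} embedded geodesic polygon: its vertices lie in convex position, all turning the same way.

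The heart of the proof, and the step I expect to be the main obstacle, is the converse geometric implication: a convex center polygon yields a maximal representation. The natural route is through the $i$-cycle criterion of Theorem~\ref{thm:icyclediscr}. Fixing an index $i$, the attractor $v_i$ and repeller $w_i$ of the hyperbolic element $R^{q_i}R^{q_{i-1}}$ are exactly the two endpoints on $\partial\mathbb D$ of the edge-geodesic $\mathrm G\wr q_{i-1},q_i\wr$, and the $i$-cycle is obtained by pushing this pair forward by the successive half-turns $R^{q_{i+1}},R^{q_{i+2}},\ldots$. I would show that convexity of $q_1\cdots q_n$ forces these $2n-4$ boundary points to be listed monotonically around $\partial\mathbb D$, i.e.\ to form a positive or negative cycle according to the orientation of the polygon; then Theorem~\ref{thm:icyclediscr} gives maximality, hence discreteness and faithfulness, completing the contrapositive. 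The delicate point is controlling the cyclic order after each reflection: one must verify, inductively on the number of half-turns applied, that $R^{q_{i+k}}$ inserts the new pair $v_i^{i+k},w_i^{i+k}$ into the correct gap without breaking monotonicity, and it is exactly here that the one-sidedness furnished by convexity (together with the explicit center-from-$i$-cycle construction recorded after Theorem~\ref{thm:icyclediscr}) must be used.

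A possible alternative for this last step, avoiding the pointwise tracking, is to note that $\Area$ is continuous on $\mathcal PH_n$ while, by Proposition~\ref{prop:arearep}, it takes values in the discrete set $\{\,n\pi+2\pi k:k\in\mathbb Z\,\}\cap[-(n-4)\pi,(n-4)\pi]$; hence $\Area$ is locally constant, so constant on connected components. If one shows that the convex counterclockwise configurations form a connected family containing at least one maximal representation, which exists and has convex centers by the Teichm\"uller parametrization recalled above, then every such configuration has area $(n-4)\pi$, and symmetrically $-(n-4)\pi$ in the clockwise case. Either way the crux reduces to the same assertion, namely that convexity of the centers is equivalent to maximality, and it is there that the real work lies.
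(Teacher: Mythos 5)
Your forward implication (discrete and faithful $\Rightarrow$ basic, via Corollary~\ref{cor:notdf}) coincides with the paper's; for the converse your plan diverges from the paper and has a genuine gap at its central step. From Lemmas~\ref{lemma:nonneigh} and~\ref{lemma:opphalf} you correctly extract that a basic representation has its centers in \emph{weakly} convex position: all $q_j$ lie in a closed half-space of each edge geodesic, and non-neighboring edges do not cross. But weak convexity does not feed your second step: a configuration with $q_{i+1}\in\mathrm G\wr q_{i-1},q_i\wr$ is weakly convex in exactly this sense, is certainly not maximal (consecutive $i$-cycle points degenerate), and is disposed of in the paper only by an additional explicit bending --- this is the sentence ``we will also assume that $q_{i+1}\notin\mathrm G\wr q_{i-1},q_i\wr$ since, otherwise, $\varrho$ is not basic'' at the start of the paper's proof. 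So the chain ``basic $\Rightarrow$ (weakly) convex $\Rightarrow$ maximal'' is false as literally stated, and upgrading to strict convexity already needs bending arguments beyond the two lemmas you invoke.

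The decisive gap, though, is the implication ``strictly convex centers $\Rightarrow$ maximal,'' which you yourself flag as the crux and never prove; neither Theorem~\ref{thm:icyclediscr} nor the parametrization recalled after it supplies it. Propagating positivity of the $i$-cycle through the successive half-turns is not routine (one must also handle asymptotic degenerations where edge lines share an endpoint at infinity), and your alternative route is essentially circular: area is indeed locally constant on $\mathcal PH_n$ by Proposition~\ref{prop:arearep}, so the entire content is the unproved claim that the strictly convex locus (within the relation variety, for a fixed orientation) is connected and meets the maximal locus --- which is equivalent to the assertion you are trying to establish. There is also a logical trap worth naming: convexity of the \emph{current} configuration does not certify basicness, since a non-basic representation may reveal a reduction only after bendings; hence convexity cannot be treated as a bending-invariant criterion until your equivalence is proved. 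The paper sidesteps all of this by arguing in the opposite direction: it locates the first failure of positivity in the $i$-cycle and converts it, through explicit bendings (cases (\emph{a})--(\emph{d}), in particular the case-(\emph{d}) move of $q_i$ toward $w_i^i$ forcing $\overline H^{\,+}(v_i^i,w_{i+2}^{i+2})\subset\overline H^{\,-}(v_i^i,w_i^{i+1})$, and the induction over the first $k$ where the cycle~\eqref{eq:cycle} fails), into the hypotheses of Lemma~\ref{lemma:nonneigh} or Lemma~\ref{lemma:opphalf}; this moreover yields the stronger conclusion --- reduction by a cancellation after finitely many bendings --- on which the paper's later corollaries rely. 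To complete your proposal you would have to prove the convexity--maximality equivalence from scratch, which is work of at least the same order as the paper's case analysis.
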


\begin{proof} 
By Corollary~\ref{cor:notdf}, if $\varrho$ is discrete and faithful, then it is basic.

Conversely, suppose that $\varrho$ is not discrete or not faithful. 
Given an index~$i$, by Theorem~\ref{thm:icyclediscr}, the $i$-cycle of $\varrho$ 
is neither positive nor negative. 

First, suppose that the cycle $v_i^i,w_i^i,v_i^{i+1},w_i^{i+1},v_i^{i+2},w_i^{i+2}$
is neither positive nor negative. Since being basic (or not) is preserved by $\PU(1,1)$-conjugation, we
assume, without loss of generality, that $v_i^i=1$, $w_i^i=-1$, and $q_i=0$.
We will also assume that $q_{i+1}\notin\mathrm G\wr q_{i-1},q_i\wr$ since, 
otherwise, $\varrho$ is not basic. (In our drawing we are assuming that $q_{i+1}$ lie
in the upper half-space given by $\mathrm G\wr q_{i-1},q_i\wr$, and that the cycle 
$v_i^i,w_i^i,v_i^{i+1},w_i^{i+1}$ is positive, but the argument is analogous
in the other case.)

\begin{figure}[h!]
\centering
\includegraphics[scale=.8]{figs/proof-a.mps}
\includegraphics[scale=.8]{figs/proof-b.mps}
\includegraphics[scale=.8]{figs/proof-cd.mps}
\end{figure}

({\it a\/}) Note that $q_{i+2}$ does not lie in $A:=H^{+}(v_i^i,w_i^{i+1})$
(the half-space delimited by $\mathrm G\wr v_i^i,w_i^{i+1}\wr$ and not containing~$q_i$), 
otherwise the cycle $v_i^i,w_i^i,v_i^{i+1},w_i^{i+1},v_i^{i+2},w_i^{i+2}$ would be
positive;

({\it b\/}) If $q_{i+2}$ lies in the region $B$ between the geodesics 
$\mathrm G\wr v_i^i,v_i^{i+1}\wr,\mathrm G\wr w_i,w_i^{i+1}\wr$ 
containing $q_i$, i.e., if $q_{i+2}\in\big(H^{-}(v_i^i,v_i^{i+1})\cap 
H^{+}(w_i^i,w_i^{i+1})\big)\bigcup\big( H^{+}(v_i^i,v_i^{i+1})\cap H^-(w_i^i,w_i^{i+1}) \big)$,
then $\mathrm G\wr q_{i-1},q_i\wr$ and 
$\mathrm G\wr q_{i+1},q_{i+2}\wr$ are non-neighboring intersecting geodesics. 

Hence, we can assume that either:

({\it c\/}) $q_{i+2}$ lies in 
$C:=H^-(v_i^i,v_i^{i+1})\cup H^-(w_i,w_i^{i+1})$; or

({\it d\/}) $q_{i+2}$ lies in $D$, where $D$ is the closed region of finite area 
that is bounded by the triangle $\Delta(v_i^i,q_{i+1},w_i^{i+1})$;

Since we are assuming that the cycle $v_i^i,w_i^i,v_i^{i+1},w_i^{i+1}$ is 
positive, $q_{i-1}$ lies in $H^{+}(q_i,q_{i+1})$. Hence, if $q_{i+2}\in C$, then either $q_{i+2}=q_{i+1}$,
in which case $\varrho$ is not basic, or $q_{i-1}$ and $q_{i+2}$
lie in opposite sides of $\mathrm G\wr q_i,q_{i+1}\wr$. In the latter case, by
Lemma~\ref{lemma:opphalf}, $\varrho$ is not basic, which proves the theorem
in the case ({\it c\/}).

Finally, assume that $q_{i+2}\in D$. Then, $w_{i+2}^{i+2}\in
\overline H^{\,+}(v_i^i,w_i^{i+1})$ (remember that $w_{i+2}^{i+2}$ is the vertex
at infinity of $\mathrm G\wr q_{i+1},q_{i+2}\wr$ that is the attractor of 
the isometry $R^{q_{i+2}}R^{q_{i+1}}$). 
If~$q_{i+3}\in H^-(v_i^i,w_{i+2}^{i+2})$, then $\varrho$ is not basic. In fact,
by Lemma~\ref{lemma:opphalf}, we can assume that $q_{i+3}$ lies in the interior
of the geodesic quadrilateral (of finite area) with vertices 
$v_i^i,q_i,q_{i+1},w_{i+2}^{i+2}$.
In this case, by bending $R^{q_{i+2}}R^{q_{i+1}}$, we arrive at a configuration
where $\mathrm G\wr q_{i+2},q_{i+3}\wr$ intersects $\mathrm G\wr q_{i-1},q_i\wr$;
these geodesics are non-neighboring with respect to $\varrho$, otherwise $i+3=i-1$ and
$n=4$ (see Definition~\ref{defi:basic}). By Lemma~\ref{lemma:nonneigh}, $\varrho$ 
is not basic.

\begin{figure}[h!]
\centering
\includegraphics[scale=.8]{figs/proof-blue.mps}
\includegraphics[scale=.8]{figs/proof-red.mps}
\includegraphics[scale=.8]{figs/proof-bend.mps}
\end{figure}

Hence, we can assume that $q_{i+3}\in\overline{H}^{\,+}(v_i^i, w_{i+2}^{i+2})$. 
Note that, if $q_{i+3}\in H^{-}(w_{i+1}^{i+1},w_{i+1}^{i+2})$, then the 
non-neighboring geodesics $\mathrm G\wr q_{i+2},q_{i+3}\wr$ and 
$\mathrm G\wr q_i,q_{i+1}\wr$ intersect and $\varrho$ is not basic. Moreover,
by bending $R^{q_i}R^{q_{i-1}}$, moving $q_i$ in the direction of $w_i^i$,
we can make $w_{i+1}^{i+1}$ as close as we want to $w_{i}^{i+1}$ (considering,
momentarily, $\partial \mathbb H_{\mathbb C}^1$ equipped with the usual metric
of $\mathbb S^1$) and, since $q_{i+2}\in \overline{H}^{\,-}(v_i^i,w_{i}^{i+1})$, we 
can make
$w_{i+1}^{i+2}\in \overline{H}^{\,-}(v_i^i,w_{i}^{i+1})$, which implies that, after 
such bending of $R^{q_i}R^{q_{i-1}}$,
$\overline{H}^{\,+}(v_i^i,w_{i+2}^{i+2})\subset\overline{H}^{\,-}(v_i^i,w_{i}^{i+1})$ and
$q_{i+3}\in H^-(v_i^i,w_{i}^{i+1})$. Therefore, $\varrho$ is not basic.

\smallskip

Now, we suppose that the cycle 
$v_i^i,w_i^i,v_i^{i+1},w_i^{i+1},\ldots,v_i^{i+k},w_i^{i+k}$ is positive, but the cycle 
\begin{equation}
\label{eq:cycle}
v_i^i,w_i^i,v_i^{i+1},w_i^{i+1},\ldots,v_i^{i+k},w_i^{i+k},v_i^{i+k+1},w_i^{i+k+1}
\end{equation}
is not positive, for some $2\leq k\leq n-4$. Again, we assume $v_i^i=1$, $w_i^i=-1$,
and $q_i=0$. By hypothesis, $\mathrm G\wr q_{i-1},q_i\wr$ and 
$\mathrm G\wr q_{i+k-1},q_{i+k}\wr$ are ultraparallel. If
$q_{i+k+1}$ does not lie in $H^+(q_{i-1},q_i)\cap H^+(q_{i+k-1},q_{i+k})$, by Lemma~\ref{lemma:opphalf},
$\varrho$ is not basic; so, suppose otherwise. Note that, as in the case ({\it a}\/) above,
if $q_{i+k+1}$ lies in $H^+(v_i^i,w_{i+k}^{i+k})$, then the cycle~\eqref{eq:cycle} is positive;
moreover, by bending $R^{q_{i+k-1}}R^{q_{i+k-2}}$ if necessary, we can assume 
that $q_{i+k+1}$ does 
not lie in $\overline{H}^{\,+}(v_i^i,w_{i+k}^{i+k})$.

In this way, suppose that $q_{i+k+1}$ lies in the interior of the region delimited by the geodesic
quadrilateral $v_i^i,q_i,q_{i+k},w_{i+k}^{i+k}$. In this case, arguing as before, by bending 
$R^{q_{i+k}}R^{q_{i+k-1}}$, we arrive at a configuration where $\mathrm G\wr q_{i+k},q_{i+k+1}\wr$
intersects $\mathrm G\wr q_{i-1},q_i\wr$, and the result follows from Lemma~\ref{lemma:nonneigh}.
\end{proof}

\begin{lemma}
\label{lemma:parity}
Le\/t $\varrho\in\mathcal PH_n$, $n\geq 5$. If\/ $\Area\varrho=\pm(m-4)\pi$ for some natural number\/
$m\leq n$, then\/ $n-m$ is an even number.
\end{lemma}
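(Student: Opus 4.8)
The plan is to derive the parity statement directly from the congruence in Proposition~\ref{prop:arearep}, which is the only ingredient I expect to need. That proposition guarantees that for any $\varrho\in\mathcal PH_n$ with $n\geq 5$, the area satisfies $\Area\varrho\equiv n\pi\pmod{2\pi}$. The hypothesis $\Area\varrho=\pm(m-4)\pi$ will then be forced to obey the same congruence, and reading this modulo $2\pi$ pins down the parity of $m$ relative to $n$.

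Concretely, I would first substitute the hypothesis into the congruence: $\pm(m-4)\pi\equiv n\pi\pmod{2\pi}$. Dividing through by $\pi$, this says precisely that the integer $\pm(m-4)-n$ is even. The next step is the observation that neither the constant $4$ nor the overall sign affects parity: since $-(m-4)=4-m\equiv m\pmod 2$ and likewise $m-4\equiv m\pmod 2$, in both cases $\pm(m-4)\equiv m\pmod 2$. Hence the evenness of $\pm(m-4)-n$ is equivalent to $m-n$ being even, i.e. $n-m$ is even, which is the assertion of the lemma.

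Since the whole argument is a short parity computation resting entirely on the congruence $\Area\varrho\equiv n\pi\pmod{2\pi}$, I do not anticipate any genuine obstacle. The only point requiring a moment of care is exactly the one isolated above, namely that both the additive constant $4$ and the sign in $\pm(m-4)\pi$ wash out modulo $2$, so that the two choices of sign lead to the same parity conclusion; the constraint $m\leq n$ and the bound $|m-4|\leq n-4$ play no role in the argument and need not be invoked.
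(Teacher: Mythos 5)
Your proof is correct and follows essentially the same route as the paper: both arguments rest solely on the congruence $\Area\varrho\equiv n\pi\pmod{2\pi}$ from Proposition~\ref{prop:arearep}, reducing the claim to the observation that $\pm(m-4)$ and $m$ have the same parity. Your version merely spells out explicitly (and correctly) that the sign and the constant $4$ are irrelevant modulo~$2$, a point the paper's proof leaves implicit.
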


\begin{proof}
By Proposition\/~\ref{prop:arearep}, $\Area\varrho=n\pi\ (\mathrm{mod}\, 2\pi)$. Thus,\/ 
$n$ and\/ $m-4$ have the same parity, which implies that\/ $n$ and\/ $m$ have the same parity. 
Therefore,\/ $n-m$ is even.
\end{proof}

\begin{cor}
\label{cor:decomp}
Let\/ $\varrho\in\mathcal PH_n$, $n\geq 5$, be a representation with\/ $\Area\varrho=\pm(m-4)\pi$,
for some natural number\/ $0\leq m\leq n$. If\/ $m\geq 5$, 
there exist\/ $\ell:=\frac{n-m}2$ {\rm(}see {\rm Lemma~\ref{lemma:parity})} 
cancellations\/~$\delta_i\in\mathcal PH_2$ and\/ $\varrho_0\in\mathcal RH_m$, 
such that
$$\varrho=\delta_1\odot_0\ldots\odot_0\delta_{\ell}\odot_0\varrho_{0},$$
up to finitely many bending-deformations and\/ $\PU(1,1)$-conjugation. 
If\/ $m=4$, there exist\/ $\ell+2$ cancellations\/ $\delta_i$ with
$$\varrho=\delta_1\odot_0\ldots\odot_0\delta_{\ell+2},$$
up to finitely many bending-deformations and\/ $\PU(1,1)$-conjugation. 
\end{cor}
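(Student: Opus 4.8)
The plan is to induct on $\ell=\tfrac{n-m}{2}$ (which is an integer by Lemma~\ref{lemma:parity}), splitting off a single cancellation at each step while holding $m$ fixed. Since $\Area\varrho^-=-\Area\varrho$ and reversing a relation also reverses any concatenation, I may assume $\Area\varrho=(m-4)\pi\geq0$. For the base case $\ell=0$ one has $n=m$, so $\Area\varrho=(n-4)\pi$ is maximal; Theorem~\ref{thm:icyclediscr} then gives that $\varrho$ is discrete and faithful, i.e. $\varrho=\varrho_0\in\mathcal RH_m$, and no cancellation is needed. (This settles $m\geq5$; the base case of the $m=4$ branch is handled at the end.)

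For the inductive step suppose $m<n$. Then $\Area\varrho=(m-4)\pi<(n-4)\pi$, so $\varrho$ is not maximal and hence, by Theorem~\ref{thm:icyclediscr}, neither discrete nor faithful; by Theorem~\ref{thm:main} it is not basic. What I really need, however, is not just \emph{some} two-piece splitting but the extraction of a cancellation, and for this I would appeal to the \emph{proof} of Theorem~\ref{thm:main}: in each of its cases, Lemmas~\ref{lemma:nonneigh} and~\ref{lemma:opphalf} show that after finitely many bendings the relation $R^{q_n}\ldots R^{q_1}=\pm1$ acquires a cancellation, reducing its length by two. Thus, up to bendings and $\PU(1,1)$-conjugation, $\varrho=\delta_1\odot_0\varrho'$ with $\delta_1\in\mathcal PH_2$ and $\varrho'\in\mathcal PH_{n-2}$. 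By Proposition~\ref{prop:areaproduct}, $\Area\varrho'=\Area\varrho=(m-4)\pi$, and since $m\leq n-2$ with $\tfrac{(n-2)-m}{2}=\ell-1$, the inductive hypothesis applies to $\varrho'$.

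Reassembling is then formal. Conjugation distributes over $\odot_0$ (because $gR^pg^{-1}=R^{gp}$, so $g(\varrho_1\odot_0\varrho_2)g^{-1}=(g\varrho_1g^{-1})\odot_0(g\varrho_2g^{-1})$, and a conjugate of a cancellation is a cancellation), so the conjugation permitted by the inductive hypothesis on $\varrho'$ lifts to one on $\varrho=\delta_1\odot_0\varrho'$; bendings internal to $\varrho'$ are bendings of the whole relation; and any $\odot_i$ produced along the way is turned into $\odot_0$ by conjugating by the intervening product of reflections, as in the remark after Definition~\ref{defi:basic}. Substituting $\varrho'=\delta_2\odot_0\cdots\odot_0\delta_\ell\odot_0\varrho_0$ and using associativity of $\odot_0$ gives $\varrho=\delta_1\odot_0\cdots\odot_0\delta_\ell\odot_0\varrho_0$ with $\varrho_0\in\mathcal RH_m$. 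The branch $m=4$ is identical with $\Area\varrho=0$, except that the peeling stops at a length-$4$ relation $\varrho^{(\ell)}\in\mathcal PH_4$: such a relation is a bending relation, so $R^{q_4}R^{q_3}$ and $R^{q_1}R^{q_2}$ share an axis, and bending $R^{q_4}R^{q_3}$ until $q_3=q_2$ (forcing $q_4=q_1$) exhibits $\varrho^{(\ell)}=\delta_{\ell+1}\odot_0\delta_{\ell+2}$; together with the $\ell$ cancellations peeled off beforehand this gives the required $\ell+2$ cancellations.

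The main obstacle is exactly the emphasized step. The statement of Theorem~\ref{thm:main} alone only yields a splitting $\varrho_1\odot_i\varrho_2$ into two shorter relations whose individual areas are uncontrolled, which would make the inductive parameter $m$ split and the bookkeeping intractable. Using the constructive content of the proof of Theorem~\ref{thm:main} to split off a cancellation instead keeps $m$ fixed and decreases $n$ by exactly $2$, so that the induction closes; everything after that reduction is the routine concatenation bookkeeping described above.
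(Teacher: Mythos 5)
Your proof is correct and follows what is evidently the paper's intended argument: Corollary~\ref{cor:decomp} is stated without an explicit proof, but the paper's own proof of the corollary $\mathcal PH_5=\mathcal RH_5$ invokes exactly the constructive content you use --- ``by the proof of Theorem~\ref{thm:main}, after finitely many bendings, the relation can be reduced by a cancellation'' --- combined with the area bookkeeping of Propositions~\ref{prop:arearep}, \ref{prop:bendarea} and~\ref{prop:areaproduct}. In particular, your key observation that the statement of Theorem~\ref{thm:main} alone (an uncontrolled splitting $\varrho_1\odot_i\varrho_2$) is insufficient, and that one must peel off cancellations one at a time so that the area-preserving induction terminates at a maximal, hence discrete and faithful, length-$m$ core (or at a length-$4$ bending relation splitting into two cancellations when $m=4$), is precisely the intended reading.
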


The decomposition of representations in the concatenation of basic relations is not unique.
In fact, consider for example $\varrho\in\mathcal PH_5$ (this implies that $\varrho\in\mathcal
RH_5$, i.e., $\varrho$ is maximal). Thus, $\Area\varrho\odot_0\varrho^{-}=0$. By
Corollary~\ref{cor:decomp}, up to finitely many bending-deformations and
$\PU(1,1)$-conjugacy, $\varrho\odot_0\varrho^-=\delta_1\odot_0\ldots\odot_0\delta_5$, for some 
cancellations $\delta_i$.

\begin{rmk}
For any number $n\geq 5$, there exists a discrete and faithful (basic)
representation $\varrho\in\mathcal RH_n$. In fact, consider a regular right-angled\/ 
$n$-sided polyhedron in\/ $\mathbb H_{\mathbb C}^1$ with vertices\/ $p_1,\ldots, p_n$.
Then\/ $R^{p_n}\ldots R^{p_1}=1$. Moreover, the representation 
$\varrho\in\mathcal PH_n$, defined $\varrho(r_i)=R^{p_i}$, is discrete and faithful.
Note that any representation $\widehat\varrho\in\mathcal RH_n$ is, up to conjugacy, 
a bending-deformation of $\varrho$.
\end{rmk}

\begin{cor}[Corollary~3.16, \cite{ABG2007}]
$\mathcal PH_5=\mathcal RH_5$.
\end{cor}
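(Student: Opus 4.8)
The plan is to show $\mathcal{PH}_5\subseteq\mathcal{RH}_5$, since the reverse inclusion is immediate from the definition of $\mathcal{RH}_5$ as a subspace of $\mathcal{PH}_5$. The entire argument will be an area computation: I will use Proposition~\ref{prop:arearep} to pin the area of an arbitrary $\varrho\in\mathcal{PH}_5$ to a single value, and then invoke Theorem~\ref{thm:icyclediscr} to conclude discreteness and faithfulness.

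Concretely, let $\varrho\in\mathcal{PH}_5$, so $n=5$. The first bullet of Proposition~\ref{prop:arearep} gives $\Area\varrho\equiv 5\pi\equiv\pi\pmod{2\pi}$, so $\Area\varrho\in\{\ldots,-3\pi,-\pi,\pi,3\pi,\ldots\}$. The second bullet gives $-(5-4)\pi\leq\Area\varrho\leq(5-4)\pi$, i.e.\ $-\pi\leq\Area\varrho\leq\pi$. The only values in the residue class $\pi\pmod{2\pi}$ lying in the interval $[-\pi,\pi]$ are $\pm\pi$, so $\Area\varrho=\pm\pi=\pm(5-4)\pi$. Hence every representation in $\mathcal{PH}_5$ is maximal.

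By Theorem~\ref{thm:icyclediscr}, a representation in $\mathcal{PH}_n$ is maximal if and only if it is faithful and discrete. Applying this with $n=5$, every $\varrho\in\mathcal{PH}_5$ is faithful and discrete, i.e.\ $\varrho\in\mathcal{RH}_5$. This proves $\mathcal{PH}_5\subseteq\mathcal{RH}_5$, and therefore $\mathcal{PH}_5=\mathcal{RH}_5$.

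There is no serious obstacle here; the content of the statement is entirely carried by the numerology of Proposition~\ref{prop:arearep}. The only point requiring attention is the observation that at $n=5$ the two constraints of that proposition—a congruence modulo $2\pi$ and a bound of width $(n-4)\pi$ on each side—are so tight that the admissible set of area values collapses to the two maximal values $\pm\pi$. For $n\geq 6$ this collapse no longer occurs, which is precisely why the corresponding statement fails (non-maximal, hence non-faithful, representations exist), consistent with the example of $\varrho^-\odot_i\varrho$ discussed earlier in the text.
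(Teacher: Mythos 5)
Your proof is correct, but it follows a genuinely different route from the one the paper takes. You pin down the area directly: for $n=5$ the congruence $\Area\varrho\equiv\pi\pmod{2\pi}$ from Proposition~\ref{prop:arearep} and the bound $|\Area\varrho|\leq\pi$ together force $\Area\varrho=\pm\pi$, so every $\varrho\in\mathcal PH_5$ is maximal, and Theorem~\ref{thm:icyclediscr} then gives discreteness and faithfulness. This is clean, self-contained, and rests only on the two results imported from~\cite{ABG2007} --- which is presumably close in spirit to the original argument there. The paper instead deduces the corollary from its own new machinery: if some $\varrho\in\mathcal PH_5$ were not discrete or not faithful, the proof of Theorem~\ref{thm:main} shows that after finitely many bendings the defining relation $R^{q_5}\ldots R^{q_1}=\varepsilon$ could be shortened by a cancellation to a length-$3$ relation between reflections, and no such relation exists in $\PU(1,1)$ --- a contradiction. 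What each approach buys: yours is more elementary, avoiding the geometric case analysis of Theorem~\ref{thm:main} entirely, and your closing observation correctly explains why the collapse of admissible area values is special to $n=5$; the paper's proof, by contrast, is deliberately placed after Theorem~\ref{thm:main} to exhibit the corollary as a consequence of the basic-relations/bending framework, showing that the new technique recovers this known result of~\cite{ABG2007} without any area computation. Both arguments are valid within the paper's logical structure, since Proposition~\ref{prop:arearep} and Theorem~\ref{thm:icyclediscr} are quoted results, not consequences of the corollary.
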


\begin{proof}
Suppose that $\varrho\in\mathcal PH_5$, given by $R^{q_5}\ldots R^{q_1}=\varepsilon$,
is not discrete or not faithful. By the proof of Theorem~\ref{thm:main}, after
finitely many bendings, such relation can be reduced to a length~$3$ relation by 
a cancellation. But there are no length~$3$ relations between involutions
in~$\PU(1,1)$, a contradiction.
\end{proof}

\bibliographystyle{plain}
\bibliography{references-hyper}

\noindent
{\sc Felipe A.~Franco}

\noindent
{\sc Departamento de Matem\'atica, ICMC, Universidade de S\~ao Paulo, Brasil}

\noindent
\url{felipefranco@usp.br}, \url{f.franco.math@gmail.com}
\end{document}